\newtheorem{thm}{Theorem}[section]
\newtheorem{lemma}[thm]{Lemma}
\newtheorem{proposition}[thm]{Proposition}
\newtheorem{remark}[thm]{Remark}
\newtheorem*{thma}{Theorem A}
\newtheorem*{thmb}{Theorem B}  
\newtheorem*{corc}{Corollary C}
\newtheorem*{thmd}{Theorem D}
\theoremstyle{definition}
\newtheorem{definition}{Definition}
\numberwithin{equation}{section}
\def\beq{\begin{equation}}
\def\eeq{\end{equation}}
\def\beqn{\begin{equation*}}
\def\eeqn{\end{equation*}}
\def\bB{\mathbf{B}}
\def\bD{\mathbf{D}}
\def\bE{\mathbf{E}}
\def\bF{\mathbf{F}}
\def\bG{\mathbf{G}}
\def\bL{\mathbf{L}}
\def\bQ{\mathbf{Q}}
\def\bR{\mathbf{R}}
\def\ba{\mathbf{a}}
\def\bc{\mathbf{c}}
\def\bk{\mathbf{k}}
\def\bn{\mathbf{n}}
\def\bp{\mathbf{p}}
\def\bq{\mathbf{q}}
\def\br{\mathbf{r}}
\def\bt{\mathbf{t}}
\def\bx{\mathbf{x}}
\def\bLambda{{\boldsymbol{\Lambda}}}
\def\btau{{\boldsymbol{\tau}}}
\def\bdelta{{\boldsymbol{\delta}}}
\def\b1{{\boldsymbol{1}}}
\def\cA{\mathcal{A}}
\def\cC{\mathcal{C}}
\def\cE{\mathcal{E}}
\def\cF{\mathcal{F}}
\def\cG{\mathcal{G}}
\def\cH{\mathcal{H}}
\def\cK{\mathcal{K}}
\def\cL{\mathcal{L}}
\def\cP{\mathcal{P}}
\def\cR{\mathcal{R}}
\def\cS{\mathcal{S}}
\def\cT{\mathcal{T}}
\def\IH{{\mathbb H}}
\def\IN{{\mathbb N}}
\def\IR{{\mathbb R}}
\def\IZ{{\mathbb Z}}
\def\hW{\hat{W}}
\def\hY{\hat{Y}}
\def\eps{\varepsilon}
\def\eps{\varepsilon}
\def\ds{\displaystyle}
\def\ab{\underline{a}}
\def\cb{\underline{c}}
\def\ovarphi{\overline{\varphi}}
\def\bt{\overline{t}}
\def\tb{\underline{t}}
\def\hW{\widehat{W}}
\def\bLambda{{\boldsymbol{\Lambda}}}
\def\bGamma{{\boldsymbol{\Gamma}}}
\def\bOmega{{\boldsymbol{\Omega}}}
\def\btau{{\boldsymbol{\tau}}}
\def\bcK{{\boldsymbol{\cK}}}
\def\bcE{{\boldsymbol{\cE}}}
\def\bvf{{\boldsymbol{\varphi}}}
\def\beps{{\boldsymbol{\eps}}}
\def\bdelta{{\boldsymbol{\delta}}}
\def\ua{\underline{a}}
\def\ub{\underline{b}}
\def\ux{\underline{x}}
\def\opsi{\overline{\psi}}
\def\CM{\mathcal{M}}
\def\wcP{\widetilde{\cP}}
\def\hY{\widehat{Y}}
\def\hf{\widehat{f}}
\def\hE{\widehat{E}}
\def\ba{\mathbf{a}}
\def\os{\overline{s}}
\def\us{\underline{s}}
\begin{document}

\title{Inducing Schemes with Finite Weighted Complexity}

\author{Jianyu Chen\thanks{School of Mathematical Sciences
\& Center for Dynamical Systems and Differential Equations,
Soochow University, Suzhou, Jiangsu 215006, P.R.China. Email: jychen@suda.edu.cn.}
\and Fang Wang \thanks{School of Mathematical Sciences, Capital Normal University, Beijing, 100048, P.R.China.  
Email: fangwang@cnu.edu.cn.}
\and Hong-Kun Zhang\thanks{Department of Mathematics and Statistics, University of Massachusetts,  Amherst MA 01003, USA. Email: hongkun@math.umass.edu. }}

\date{\today}

\maketitle

\begin{abstract}
In this paper, we consider a Borel measurable map of a compact metric space
which admits an inducing scheme. 
Under the finite weighted complexity condition, 
we establish a thermodynamic formalism for a parameter family of potentials 
$\varphi+t\psi$ in an interval containing $t=0$. 
Furthermore, if there is a generating partition compatible to the inducing scheme,
we show that all ergodic invariant measures with sufficiently large pressure are liftable. 

\medskip

\noindent \textbf{Keywords:}
Inducing schemes, 
Weighted Complexity,
Equilibrium measures.
\end{abstract}

\maketitle

\tableofcontents

\section{Introduction}

The main goal in the thermodynamic formalism is to study 
the \emph{equilibrium measures} of a dynamical system $f: X\to X$
for a \emph{potential function} $\varphi: X\to \IR$, i.e., 
the measures for which the supremum 
\beqn
\sup_{\mu\in \CM(f, X)} P_\mu(\varphi)
\eeqn
is attained, where  $\CM(f, X)$ is the class of all $f$-invariant ergodic Borel probability measure on $X$, and
$P_\mu(\varphi)$ is 
the \emph{free energy} given by 
\beq\label{def Pmu}
P_\mu(\varphi):= h_\mu(f) + \int \varphi d\mu.
\eeq
Here $h_\mu(f)$ denotes the Kolmogorov-Sinai entropy of the system $(f, \mu)$. 

The classical works of Sinai, Ruelle and Bowen (see e.g.~\cite{Sin72, Ru78, Bow08}) demonstrated
the existence, uniqueness and ergodic properties of equilibrium measures for 
uniformly hyperbolic systems.
Great efforts have later been made for systems 
beyond uniform hyperbolicity,
using various \emph{extension/inducing} techniques 
(see 
e.g.~\cite{Hof79, BS80, Hof81, Ke89, BSC90, Bruin95, Y98, Y99, Buzzi99, CZ05a, CZ05b, BrTo07, De10, DZ13, DZ14}).
Using principle results obtained by Sarig (see e.g.~\cite{Sarig99, Sarig01, Sarig03})
on the thermodynamic formalism for the countable Markov shifts,
Pesin, Senti and Zhang \cite{PS05, PS08, PSZ08, PSZ16} developed
a version of the \emph{inducing scheme} method,
which is applicable to some multimodal interval maps, the
Young's diffeomorphisms, the H\'enon family and the Katok map.

In this paper, we consider a Borel measurable map $f: X\to X$
of a compact metric space, possibly with discontinuities and singularities. 
We assume that the map $f$  
admits an \emph{inducing scheme} $\{\cS, \tau\}$
satisfying Conditions \textbf{(I1)-(I4)}  (see Definition~\ref{def is}),
which is either of hyperbolic type or of expanding type.
The inducing scheme $\{\cS, \tau\}$
might not have \emph{finite complexity} (see~\eqref{def finite comp}),
in particular, there may be infinitely many blocks with the same inducing time. 
In such situation, we could only expect  
some \emph{finite weighted complexity} condition (see Definition~\ref{def finite psi comp}) 
for  particular weight functions.

In the context of continuous maps admitting inducing schemes, 
Pesin, Senti and Zhang \cite{PS05, PS08, PSZ08, PSZ16} have established a thermodynamic formalism 
with respect to a class of \emph{nice} potential functions,
i.e., functions satisfying the verifiable conditions \textbf{(P1)-(P4)} in Section 4 of \cite{PSZ16} 
(see Definition~\ref{def regular nice} for the precise statements). 
In this paper, we adapt their results to a measurable map $f: X\to X$ with discontinuities and singularities,
which is assumed to have finite \emph{variational entropy} (see \eqref{def entropy}). 
Once an inducing scheme $\{\cS, \tau\}$ is chosen, 
we denote $Y$ the forward invariant subset associated with $\{\cS, \tau\}$  (see \eqref{def Y}). 
The potential functions under consideration is a one parameter family
$\{\varphi+t\psi\}_{t\in \IR}$, where $\varphi$ is a \emph{strongly nice} potential 
and $\psi$ is a \emph{regular} potential 
(see Definition~\ref{def regular nice}).
Under Condition \textbf{(H)} below, which assumes the
finite weighted complexity (see Definition~\ref{def finite psi comp}) at two parameters $\tb_0<0<\bt_0$, 
we are able to establish a thermodynamic formalism 
for the potentials $\varphi+t\psi$   in an interval containing $t=0$.
Applying a result by Shahidi and  Zelerowicz \cite{ShZe19},
we also obtain the analyticity of the \emph{variational lifted pressure} (see \eqref{def lift pressure})
in this interval.

The precise statements of our first main result is as follows.

\begin{thma} 
Let $\{\cS, \tau\}$ be an inducing scheme.
Assume that  $\varphi$ is a strongly nice potential  
and  $\psi$ is a regular potential.
Furthermore,  we assume that
\begin{enumerate}
\item[\textbf{(H)}] 
there are $\tb_0<0<\bt_0$ such that the inducing scheme  $\{\cS, \tau\}$
has finite $\overline{\varphi+t\psi}$-complexity (see Definition~\ref{def finite psi comp}) 
at $t=\tb_0$ and $t=\bt_0$, 
where $\overline{\varphi+t\psi}$ is the induced potential of $\varphi+t\psi$
(see~\eqref{def induced potential}).
\end{enumerate}
Then there exist $\tb<0<\bt$ such that the following hold:
\begin{enumerate}
\item[(1)] 
for every $t\in (\tb, \bt)$,
there exists a unique equilibrium measure $\mu_{t}$
for the potential $\varphi+t\psi$ 
in the class $\CM_L(f, Y)$ of all liftable measures (see \eqref{def lift class});
\item[(2)] 
if the inducing scheme $\{\cS, \tau\}$ satisfies the aperiodic condition~\eqref{def aperiodic}, 
then the  measure $\mu_{t}$ is mixing and in fact Bernoulli. Moreover, $\mu_t$
has exponential decay of correlations and satisfies the 
Central Limit Theorem (with respect to a class of functions 
which contains all bounded H\"older continuous functions on $Y$).
\item[(3)] 
the  function $\ds t\mapsto P_L(\varphi+t\psi)$
is finite and real analytic in $(\tb, \bt)$, where $P_L(\cdot)$ is the 
variational liftable pressure  given by \eqref{def lift pressure}.
\end{enumerate}
\end{thma}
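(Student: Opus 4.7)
The plan is to lift the problem to the symbolic dynamics of the inducing scheme $\{\cS, \tau\}$, apply a Sarig-type thermodynamic formalism for countable Markov shifts, and then project the resulting equilibrium state back to $Y$ using the Kakutani tower construction. More precisely, the induced map $F = f^\tau$ on the base $W$ is (semi)conjugate to the full shift on sequences of symbols from $\cS$, and for each $t$ the induced potential $\overline{\varphi + t\psi}$ is obtained by summing $\varphi + t\psi$ along the orbit of length $\tau$. The natural candidate for an equilibrium measure on $Y$ is the projection of the Gibbs/equilibrium measure $\tilde\mu_t$ for the normalized induced potential $\overline{\varphi + t\psi} - P_L(\varphi+t\psi)\,\tau$ on the shift, lifted to the tower and normalized by $\int \tau\, d\tilde\mu_t$.

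First I would show that the regularity assumptions on $\varphi$ (strongly nice) and $\psi$ (regular) transfer to the induced potentials: these are locally H\"older in the symbolic metric, and the Gurevich pressure is finite precisely when the $\overline{\varphi+t\psi}$-complexity is finite. Under hypothesis \textbf{(H)} this holds at $t=\tb_0$ and $t=\bt_0$; by convexity of $t\mapsto \sum_{\tau(J)=n}\sup_J e^{\overline{\varphi+t\psi}}$ together with the monotonicity of the finite-complexity condition, I would deduce that finite complexity holds throughout $[\tb_0,\bt_0]$. Applying Sarig's RPF theorem (in the aperiodic case) yields, for each $t$ and an auxiliary parameter $c$, a unique Gibbs equilibrium state for $\overline{\varphi + t\psi} - c\tau$ whenever the Gurevich pressure is finite; the correct value $c = P_L(\varphi+t\psi)$ is obtained by solving $P_{\mathrm{Gur}}(\overline{\varphi+t\psi} - c\tau)=0$, which is possible by continuity and strict monotonicity of this quantity in $c$.

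Next I would project $\tilde\mu_t$ to a measure $\mu_t$ on $Y$, which requires $\int \tau\, d\tilde\mu_t < \infty$. This is the quantitative payoff of having finite complexity \emph{strictly inside} the interval $[\tb_0,\bt_0]$: the strict inequalities $\tb_0<0<\bt_0$ give slack that forces the derivative of the Gurevich pressure in $c$ to be finite at $t=0$ (and hence on a neighborhood), and this derivative is essentially $-\int \tau\, d\tilde\mu_t$. Shrinking to some $(\tb,\bt)\subseteq (\tb_0,\bt_0)$ containing $0$, I obtain liftability of $\tilde\mu_t$ and hence a measure $\mu_t\in\CM_L(f,Y)$. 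The Abramov formula $h_{\mu_t}(f)=h_{\tilde\mu_t}(F)/\int\tau\,d\tilde\mu_t$, together with the analogous identity for $\int(\varphi+t\psi)\,d\mu_t$, shows that maximizing the free energy on the shift corresponds exactly to maximizing $P_\mu(\varphi+t\psi)$ over $\mu\in\CM_L(f,Y)$, proving assertion (1); uniqueness descends from uniqueness on the shift.

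For (2), aperiodicity activates the strong version of Sarig's RPF theorem: the Ruelle transfer operator for $\overline{\varphi+t\psi}-P_L(\varphi+t\psi)\tau$ has a spectral gap on a suitable space of locally H\"older functions, yielding Bernoullicity, exponential decay of correlations, and the CLT for the induced system. These statistical properties descend to $(f,\mu_t)$ restricted to $Y$ for H\"older observables by standard induction arguments (Young tower, Melbourne-T\"or\"ok). Finally, (3) follows from the real-analytic dependence of the leading eigenvalue of the transfer operator on $(t,c)$, combined with the implicit function theorem applied to $P_{\mathrm{Gur}}(\overline{\varphi+t\psi}-c\tau)=0$, in the framework developed by Shahidi-Zelerowicz \cite{ShZe19}. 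The main obstacle I anticipate is controlling the liftability window: verifying that the finite-complexity condition at two points $\tb_0<0<\bt_0$ really does propagate to finiteness of the mean return time on an \emph{open} interval about $0$, and that strong niceness of $\varphi$ (as opposed to mere regularity) is enough to absorb the distortion estimates needed to apply Sarig's machinery uniformly in $t$. This is where the precise hypotheses in the definitions of \emph{strongly nice}, \emph{regular}, and \emph{finite weighted complexity} must be used in full force.
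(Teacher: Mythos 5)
Your high-level skeleton (induce, apply Sarig's RPF theory on the countable full shift to $\overline{\varphi+t\psi}-c\tau$, solve for the root $c=P_L(\varphi+t\psi)$, project via Abramov--Kac, and get analyticity from the Shahidi--Zelerowicz framework) is essentially the machinery the paper imports wholesale from \cite{PS08, PSZ16, ShZe19} (Theorems~\ref{thm: is0} and \ref{thm: SZ}); re-deriving it is fine but is not where the content of Theorem~A lies. The actual work is to verify that the family $\varphi+t\psi-q_t$ is \emph{strongly nice} --- i.e.\ Conditions \textbf{(P3)--(P5)} for the normalized induced potentials --- on an open interval of $t$ around $0$, and this is exactly the step you leave as an ``anticipated obstacle.'' Concretely, everything in your outline (existence of the root of the Gurevich pressure, $\int\tau\,d\tilde\mu_t<\infty$, the exponential return-time tail needed to push decay of correlations and the CLT from the tower down to $(f,\mu_t)$, and niceness on an open interval so that \cite{ShZe19} applies) hinges on the strict gap $\kappa(t):=\cK\bigl(\overline{\varphi+t\psi}\bigr)<P_L(\varphi+t\psi)$ for all $t$ in some $(\tb,\bt)\ni 0$, and you neither prove this nor identify its correct source.

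Your proposal attributes the needed ``slack'' to the strict inequalities $\tb_0<0<\bt_0$ in \textbf{(H)}. That is a misdiagnosis: finite weighted complexity at the two endpoints only yields (via a H\"older-inequality/log-convexity argument, which moreover must be run on $v_n(t)=\sum_a\rho_a\xi_a^t$ rather than on $u_n(t)$ itself, using the \textbf{(P2)} oscillation bound to compare the two) that $\kappa$ is finite, convex and hence continuous on $(\tb_0,\bt_0)$; it gives no strict inequality against the pressure, hence no control of the derivative of the Gurevich pressure in $c$ and no exponential tail. The gap at $t=0$ comes instead from the \emph{strong} niceness of $\varphi$: the exponential tail \textbf{(P5)} of $\nu_{\varphi^+}$ together with the Gibbs property forces $\kappa(0)\le p+\log\theta<p=P_L(\varphi)$. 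One then needs a linear lower bound $P_L(\varphi+t\psi)\ge q_t:=p+\lambda t$ with $\lambda=\int\psi\,d\mu_\varphi$, which requires a separate argument (the paper's Lemma~\ref{lem: kappa}--Lemma~\ref{lem: PSZ pt} chain) showing $\lambda$ is finite --- itself a nontrivial use of \textbf{(H)} and the Gibbs estimates, and absent from your proposal. Only then does convexity/continuity of $\kappa(t)-q_t$, negative at $t=0$, produce the open interval $(\tb,\bt)$ (the paper's \eqref{choose tb bt}) on which \textbf{(P3)--(P5)} can be checked by direct summation of $u_n(t)e^{-q_t n}$. Without this chain your construction of $\tilde\mu_t$, its liftability, the Bernoulli/decay/CLT statements, and the analyticity of $t\mapsto P_L(\varphi+t\psi)$ are all unsupported; also note that your claim ``the Gurevich pressure is finite precisely when the $\overline{\varphi+t\psi}$-complexity is finite'' conflates the summability condition \textbf{(P3)} with the strictly weaker Definition~\ref{def finite psi comp}, and that strong niceness is needed for the tail estimate, not for ``distortion,'' which is already handled by \textbf{(P2)}.
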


We remark that $\tb$ and $\bt$ are chosen by the explicit formula in~\eqref{choose tb bt}.

\medskip

Our second result deals with the \emph{liftability problem} over the inducing schemes,
that is, we would like to determine whether an ergodic invariant measure $\mu\in \CM(f, X)$
is \emph{liftable}, i.e., $\mu\in \CM_L(f, Y)$. 
For continuous maps admitting inducing schemes, 
Pesin, Senti and Zhang \cite{PSZ08, PSZ16} 
provide some conditions under which all ergodic invariant measures of sufficiently large entropy are liftable. 
A crucial assumption therein is the finite complexity for non-first-return inducing times  
(see Condition (L2) in Theorem 5.1 of \cite{PSZ16}). 
For systems with nasty discontinuities and singularities, one could only impose 
certain finite weighted complexity condition like Condition \textbf{(L2)} below. 
Together with Condition \textbf{(L1)} below, which asserts the existence of a finite generating partition
\emph{compatible} to the inducing scheme  (see Definition~\ref{def compatible partition}),
we are able to show that all ergodic invariant measures with 
sufficiently large free energy are liftable.

Our second main result is stated as follows.

\begin{thmb}
Let $\{\cS, \tau\}$ be an inducing scheme.
We assume that
\begin{enumerate}
\item[\textbf{(L1)}]
the map $f: X\to X$ has a generating partition $\cP$, which is compatible to 
the inducing scheme $\{\cS, \tau\}$ (see Definition~\ref{def compatible partition});
\item[\textbf{(L2)}] 
the inducing scheme $\{\cS, \tau\}$ has finite $\ovarphi$-complexity  (see Definition~\ref{def finite psi comp}),
for some potential function $\varphi: X\to \IR$.
\end{enumerate}
Then any ergodic measure $\mu\in \CM(f, X)$ 
with $\mu(W)>0$ and 
$\cK(\ovarphi)<P_\mu(\varphi)<\infty$ is liftable,
where  $\cK(\ovarphi)$ and $P_\mu(\varphi)$ are given by  
Definition~\ref{def finite psi comp} and \eqref{def Pmu} respectively.
\end{thmb}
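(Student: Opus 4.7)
The plan is to follow the strategy used by Pesin, Senti and Zhang \cite{PSZ08, PSZ16} for continuous systems, adapted to the measurable/singular setting via the generating partition in \textbf{(L1)}: show that the portion of $\mu$ not captured by the inducing scheme carries $\varphi$-free energy at most $\cK(\ovarphi)$, so that the strict inequality $P_\mu(\varphi)>\cK(\ovarphi)$ forces that portion to vanish.

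The first step is to use \textbf{(L1)} to translate liftability into a symbolic question. Since $\cP$ is generating and compatible with $\{\cS,\tau\}$, each block $J\in\cS$ is presumably an atom of $\bigvee_{k=0}^{\tau(J)-1}f^{-k}\cP$, so the $\cP$-itinerary of an orbit can, at each visit to $W$, be tested against the block dictionary of $\cS$. For $\mu$-a.e.\ $x\in W$ this produces a (possibly finite) increasing sequence of inducing epochs $\tau_{1}(x)<\tau_{2}(x)<\cdots$; the measure $\mu$ is liftable precisely when, for $\mu$-a.e.\ $x\in W$, this sequence is infinite and has positive asymptotic density. Denote this set of ``good'' points $W^{L}$ and put $W^{R}:=W\setminus W^{L}$. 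Since belonging to $W^{R}$ is a tail-event with respect to the first-return map to $W$, the ergodicity of $\mu$ forces either $\mu(W^{R})=0$, in which case we are done, or $\mu(W^{R})=\mu(W)>0$.

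In the second case, I would bound the free-energy contribution of $W^{R}$. The orbits belonging to $W^{R}$ are precisely those whose $\cP$-names are not asymptotically decomposable into concatenations of $\cS$-blocks, and \textbf{(L2)} is tailored to bound the $\ovarphi$-weighted combinatorial count of such names at length $n$ by $\exp\!\bigl(n\,\cK(\ovarphi)+o(n)\bigr)$. Coupling this topological bound with a Shannon--McMillan--Breiman/Katok-type estimate applied to the $\cP$-itineraries of $\mu$-typical points of $W^{R}$ yields
\[
P_\mu(\varphi)\leq \cK(\ovarphi),
\]
contradicting the hypothesis; therefore $\mu(W^{R})=0$ and $\mu\in\CM_{L}(f,Y)$.

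The main technical obstacle will be executing the second step cleanly, namely relating the measure-theoretic pressure of $\mu|_{W^{R}}$ to the topological weighted complexity $\cK(\ovarphi)$ through the generating partition. Concretely, one has to (i) verify that the possible discontinuities of $f$ do not obstruct the generating-partition identity $h_\mu(f)=\lim_n \tfrac1n H_\mu\bigl(\bigvee_{k=0}^{n-1}f^{-k}\cP\bigr)$; (ii) avoid double-counting contributions from non-first-return blocks, which is exactly where \textbf{(L2)} and the specific definition of $\cK(\ovarphi)$ are designed to be effective; and (iii) pass from topological counts of block concatenations (and their ``failures'') to $\mu$-almost-sure exponential rates along orbits via a covering/Katok-partition argument valid for measurable dynamics. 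Once these three points are settled, the dichotomy above closes the proof.
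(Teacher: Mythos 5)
Your outline follows the right general strategy (argue by contradiction, compare a Shannon--McMillan--Breiman lower bound with a weighted counting upper bound through the generating partition), but it has two genuine gaps, and they sit exactly where the paper's proof does its real work.

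First, your opening step asserts that $\mu$ is liftable \emph{precisely when} for $\mu$-a.e.\ $x\in W$ the sequence of inducing epochs is infinite with positive asymptotic density, and then disposes of the dichotomy by a tail-event/ergodicity argument. This equivalence is neither proved nor available. Note that for every $x\in W$ the epoch sequence is automatically infinite, since $F$ maps $W$ into itself by \textbf{(I1)}; the delicate point is that $\tau$ is not a first-return time, so $\mu|_W$ is not $F$-invariant, and pointwise density information does not by itself produce an $F$-invariant measure $\nu$ with $\int\tau\,d\nu<\infty$ projecting to $\mu$ (weak-$*$ limits of empirical measures are not usable here because $f$ is only measurable). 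The paper does not prove any such equivalence: it invokes the one-directional Keller--Zweim\"uller-type criterion (Lemma~\ref{lem: freq}, quoted from \cite{PSZ16}), which says that non-liftability forces the frequencies $A_{n_k}^N$ of visits to the low part $\hE_N$ of the tower to tend to $0$ \emph{uniformly} on subsets $Z\subset W$ of almost full measure in $W$, along a subsequence $n_k$. The uniformity and the specific scales $n_k$ are what make a covering argument at time $n_k$ possible; your a.e.-pointwise dichotomy is itself the hard content and is simply asserted.

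Second, the counting step misidentifies the mechanism. Orbits of points of $W$ are \emph{always} decomposable into concatenations of $\cS$-blocks; the relevant feature is not ``non-decomposability'' but that, on the bad set, the decomposition up to time $n_k$ uses very few blocks (at most about $2\delta n_k$, because most inducing times exceed $N$). This is quantitatively essential: \textbf{(L2)} alone bounds the weighted sum over codes with $s$ blocks by roughly $D^{s}e^{\cK(\ovarphi)\,n}$ times the number of compositions of $n$ into $s$ parts, and if $s$ is proportional to $n$ these factors contribute a definite extra exponential rate (of order $\log 2+\log D$), so you would only get $P_\mu(\varphi)\le\cK(\ovarphi)+\mathrm{const}$, which is no contradiction with $P_\mu(\varphi)>\cK(\ovarphi)$. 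In the paper, the frequency bound from Lemma~\ref{lem: freq} caps the number of blocks, so the combinatorial factor is at most $e^{n(3\delta+h(2\delta))}$ and the constants contribute $e^{2\delta n\log D}$, all absorbed by taking $\delta$ small relative to $P_\mu(\varphi)-\cK(\ovarphi)$; compatibility \textbf{(L1)} then guarantees each code set $A_{\ba}$ lies in an element of $\cP_{n(\ba)}$, so the codes are admissible covers for the Caratheodory--Pesin quantity, and Lemma~\ref{lem: est Q} (which is essentially your SMB step, and is fine) yields the contradiction. Without the frequency lemma and the explicit few-blocks counting, your outline cannot be closed.
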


\medskip

An  immediate corollary of  
Theorem~A and Theorem~B is the following. 
 
\begin{corc}
Let $\{\cS, \tau\}$ be an inducing scheme with inducing domain $W$.
Assume that  $\varphi$ is a strongly nice potential  
and  $\psi$ is a regular potential.
If Conditions \textbf{(H)} and  \textbf{(L1)} hold, then
the measure $\mu_t$ obtained in Theorem~A is in fact 
the unique equilibrium measure for the potential $\varphi+t\psi$ 
in the class 
\beq\label{def CM*}
\CM^W(f, X):=\left\{\mu\in \CM(f, X):    
  \mu(W)>0 \right\}.
\eeq
Therefore,
the function
$
\ds t\mapsto P^W(\varphi+t\psi) 
$
is finite and real analytic in $(\tb, \bt)$, where
$P^W(\cdot)$ is  the \emph{variational pressure}  given by \eqref{def v pressure}.
\end{corc}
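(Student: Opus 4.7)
The plan is to combine Theorem~A with Theorem~B, by showing that the variational pressure $P^W(\varphi+t\psi)$ agrees with the variational liftable pressure $P_L(\varphi+t\psi)$ throughout $(\tb,\bt)$; uniqueness and analyticity then transfer for free.

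First I would extend Condition \textbf{(L2)} from the two parameters $\tb_0$ and $\bt_0$, where it is granted by \textbf{(H)}, to every $t\in[\tb_0,\bt_0]$. Since the induced potential is affine in $t$, namely $\overline{\varphi+t\psi}=\ovarphi+t\overline{\psi}$ on each inducing block, the $\overline{\varphi+t\psi}$-weighted complexity sum is log-convex in $t$ by a direct H\"older estimate, so finiteness at the two endpoints forces finiteness on the whole segment. Moreover, the choice of $\tb,\bt$ in \eqref{choose tb bt} together with Theorem~A yields the strict inequality
\beqn
P_L(\varphi+t\psi)>\cK(\overline{\varphi+t\psi})\qquad\text{for every }t\in(\tb,\bt).
\eeqn

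Next, fix $t\in(\tb,\bt)$ and argue by contradiction that $P_\nu(\varphi+t\psi)\leq P_L(\varphi+t\psi)$ for every ergodic $\nu\in\CM^W(f,X)$. Suppose instead that $P_\nu(\varphi+t\psi)>P_L(\varphi+t\psi)$; combining with the above display,
\beqn
\cK(\overline{\varphi+t\psi})<P_\nu(\varphi+t\psi)<\infty,
\eeqn
where the upper bound follows from the standing finite-variational-entropy assumption together with the boundedness of $\varphi$ and $\psi$ built into the notions of strongly nice and regular potentials. Condition \textbf{(L1)} and the extended \textbf{(L2)} now let Theorem~B apply to $\nu$, producing $\nu\in\CM_L(f,Y)$. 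But the very definition of $P_L$ then forces $P_\nu(\varphi+t\psi)\leq P_L(\varphi+t\psi)$, a contradiction. Hence $P^W(\varphi+t\psi)\leq P_L(\varphi+t\psi)$. The reverse inequality is immediate because $\mu_t$ is liftable, therefore charges the inducing domain $W$ and lies in $\CM^W(f,X)$, while attaining the value $P_L(\varphi+t\psi)$. Thus $P^W(\varphi+t\psi)=P_L(\varphi+t\psi)$, and $\mu_t$ is an equilibrium measure in $\CM^W(f,X)$.

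Uniqueness in $\CM^W(f,X)$ follows by the same Theorem~B lifting: any other equilibrium $\nu\in\CM^W(f,X)$ satisfies $P_\nu(\varphi+t\psi)=P_L(\varphi+t\psi)>\cK(\overline{\varphi+t\psi})$, hence is liftable by Theorem~B, and therefore coincides with $\mu_t$ by the uniqueness clause of Theorem~A(1). Finiteness and real-analyticity of $t\mapsto P^W(\varphi+t\psi)$ on $(\tb,\bt)$ then follow instantly from the identity $P^W=P_L$ together with Theorem~A(3). The main technical obstacle I anticipate is the log-convexity step propagating \textbf{(L2)} from $\{\tb_0,\bt_0\}$ to intermediate parameters; once this is secured, the rest is a clean chaining of Theorems~A and~B.
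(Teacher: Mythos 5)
Your proposal is correct and takes essentially the same route as the paper: finite $\overline{\varphi+t\psi}$-complexity at intermediate parameters comes from the convexity/H\"older estimate (this is already Lemma~\ref{lem: kappa}), the strict gap $\cK(\overline{\varphi+t\psi})=\kappa(t)<q_t\le p_t=P_L(\varphi+t\psi)$ comes from Lemmas~\ref{lem: PSZ pt} and~\ref{lem: kappa1}, and then Theorem~B lifts any competing ergodic measure in $\CM^W(f,X)$, forcing $P^W=P_L$ and uniqueness, with analyticity from Theorem~A(3). One caveat: your justification of $P_\nu(\varphi+t\psi)<\infty$ via ``boundedness of $\varphi$ and $\psi$ built into the notions of strongly nice and regular potentials'' is not available --- Conditions \textbf{(P1)-(P5)} do not assert boundedness of the potentials (the paper even stresses that $\ovarphi$ may be unbounded); the correct and easy fix, which is what the paper does, is that the supremum defining $P^W$ in \eqref{def v pressure} ranges only over measures with $\int|\varphi+t\psi|\,d\mu<\infty$, so finiteness of the free energy follows from this integrability together with the finite variational entropy assumption \eqref{def entropy}.
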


\section{Inducing Schemes}

To work on the thermodynamics for continuous maps of compact metric spaces,
Pesin, Senti and Zhang developed an inducing scheme method in 
\cite{PS05, PS08, PSZ08, PSZ16}. 
Our goal is to adapt this general framework to systems with discontinuities and singularities,
for which the finite complexity condition (see~\eqref{def finite comp}) usually fails. To this end,
we first introduce the basic notations as follows.

Let $f: X\to X$ be a measurable map of a compact metric space,
which is allowed to have discontinuities. 
If further $X$ is a  smooth manifold,
there may be singularity points at which $f$ is continuous but $Df$ does not exist. 
We assume that $f$ has finite \emph{variational entropy}, that is,
\beq\label{def entropy}
h(f):=\sup_{\mu\in \CM(f, X)} h_\mu(f)<\infty,
\eeq
where $\CM(f, X)$ is the set of all $f$-invariant ergodic Borel probability measures on $X$
and $h_\mu(f)$ is the Kolmogorov-Sinai entropy of the system $(f, \mu)$.

The definition of inducing schemes is given as follows.

\begin{definition} 
\label{def is}

Given a countable collection of disjoint Borel subsets $\cS=\{J_a\}_{a\in S}$ and 
a positive integer value function $\tau: S\to \IN$, 
we say that $f$ admits an \emph{inducing scheme  $\{\cS, \tau\}$ of hyperbolic type}, with 
the \emph{inducing domain} $W:=\bigcup_{a\in S} J_a$ and 
the \emph{inducing time} $\tau: X\to \IN$ defined by 
$\ds
 \tau(x):=
 \begin{cases}
 \tau(a),  & x\in J_a, \\
 0,  & x\not\in W,
 \end{cases}
$
provided the following conditions \textbf{(I1)-(I4)} hold: 
\begin{enumerate}
\item[\textbf{(I1)}] 
For any $a\in S$, we have 
\beq\label{is I1}
f^{\tau(a)}(J_a)\subset W \ \ \text{and} \ \ \bigcup_{a\in S} f^{\tau(a)}(J_a)=W. 
\eeq
Moreover, let $F: W\to W$ be the \emph{induced map} defined by
\beqn
F|J_a= f^{\tau(a)}|J_a \ \ \text{for any}\ a\in S, 
\eeqn
and assume that 
$F|J_a$ can be extended to a homeomorphism of a neighborhood of 
the closure $\overline{J_a}$.
\item[\textbf{(I2)}] 
For every bi-infinite sequence $\ua=(a_n)_{n\in\IZ} \in S^\IZ$, there exists a unique
sequence $\ux=(x_n)_{n\in \IZ}$ with coordinates $x_n=x_n(\ua)$ such that
\begin{itemize}
\item[(a)] $\ds x_n\in \overline{J_{a_n}}$ and 
$\ds f^{\tau(J_{a_n})}(x_n)=x_{n+1}$;
\item[(b)]
if $\ds x_n(\ua)=x_n(\ub)$ for all $n\le 0$, then $\ua=\ub$. 
\end{itemize}
\end{enumerate} 
Denote by $\sigma: S^\IZ\to S^\IZ$ the full left shift and let
\beqn
\check{S}:=\left\{ \ua\in S^\IZ: \ x_n(\ua)\in J_{a_n}\ \text{for all}\ n\in \IZ \right\}.
\eeqn
\begin{enumerate}
\item[\textbf{(I3)}] 
The set $S^\IZ\backslash \check{S}$ supports no ergodic $\sigma$-invariant 
measure which gives positive mass to any open subset.
\item[\textbf{(I4)}]
The induced map $F$ has at least one periodic orbit in $W$.  
\end{enumerate} 
\end{definition}

\medskip

\begin{remark}
We say $f$ admits an \emph{inducing scheme  $\{\cS, \tau\}$ of expanding type}
if Conditions \textbf{(I1)-(I4)} hold, with the following changes:
\begin{itemize}
\item[(1)]
the second equation of \eqref{is I1} in \textbf{(I1)} is replaced by $f^{\tau(a)}(J_a)=W$;
\item[(2)]
the two-sided infinite sequence in $S^\IZ$ is replaced by one-sided  infinite sequence in $S^{\IN_0}$,
and Condition \textbf{(I2)}(b) is removed. 
\end{itemize}
\end{remark}

In this paper, we shall only state and prove results for inducing schemes of hyperbolic type, 
as all the results for the expanding type can be obtained in a similar fashion,
thereby we omit the term ``of hyperbolic type'' when we mention inducing schemes.

We say that an inducing scheme $\{\cS, \tau\}$ satisfies the \emph{aperiodicity} condition if 
\beq\label{def aperiodic}
\ds gcd\left\{\tau(a):  a\in S \right\}=1.
\eeq

In what follows, we shall fix an inducing scheme $\{\cS, \tau\}$ and 
explain the definitions of weighted complexity, 
liftable measures and regular/nice potentials, 
and then we state a main theorem in \cite{PS08, PSZ16} concerning
on the thermodynamics of the inducing scheme $\{\cS, \tau\}$.

\subsection{Weighted Complexity}

For any $n\in \IN$, we set 
\beq\label{def Sn}
S_n:=\{a\in S: \ \tau(a)=n\}.
\eeq
We say that the inducing scheme $\{\cS, \tau \}$ has \emph{finite complexity} if 
\beq\label{def finite comp}
\# S_n<\infty \ \text{for all} \ n\ge 1, 
\ \text{and} \ 
\limsup_{n\to\infty} \frac1n \log \# S_n <\infty.
\eeq
Due to the presence of discontinuities and singularities, 
the finite complexity condition~\eqref{def finite comp} usually fails.
In fact, when $X$ is a smooth manifold and $Df$ blows up near the singularity points,
it is very likely that $\# S_n=\infty$ for infinitely many $n$'s,
or every $\# S_n$ is finite but grows super-exponentially fast.

Nevertheless, when we deal with some specific class of potentials, 
it would be useful to introduce the following concept.

\begin{definition}\label{def finite psi comp}
Given a weight function $\omega:  W\to \IR$,  
the inducing scheme $\{\cS, \tau \}$ is said to have
\emph{finite $\omega$-complexity} if 
\begin{itemize}
\item[(a)] $\ds U_n(\omega):=\sum_{a\in S_n} \sup_{x\in  J_a} \exp{(\omega(x))}<\infty$ for all $n\ge 1$;
\item[(b)] $\ds \cK(\omega):=\limsup_{n\to\infty} \frac1n \log  U_n(\omega) <\infty.$
\end{itemize}
 We shall call $\cK(\omega)$ the \emph{complexity function} of $\omega$. 
\end{definition}
 
\medskip

It is clear that the finite complexity condition~\eqref{def finite comp} 
is a special case of the finite weighted complexity condition
with the weight function $\omega\equiv 0$.

\subsection{Pressure and Equilibrium Measures}

The \emph{forward invariant hull} of the inducing domain $W$ is defined by 
\beq\label{def Y}
Y:=\bigcup_{k=0}^\infty f^k(W)=
\left\{ f^n(x):\ x\in W, \ 0\le n< \tau(x) \right\}.
\eeq
Denote the set of $f$-invariant ergodic Borel probability measures on $X$ (or $Y$) by 
$\CM(f, X)$ (or $\CM(f, Y)$),
and denote the set of $F$-invariant ergodic Borel probability { measures} on $W$ by $\CM(F, W)$.
For any $\nu\in \CM(F, W)$, let
\beq\label{def Qnu}
Q_\nu:=\int_{W} \tau\ d\nu.
\eeq
If $Q_\nu<\infty$, then $\nu$ is \emph{liftable} to a measure $\mu=\cL(\nu)\in \CM(f, Y)$ given by
\begin{equation*}
\mu(E):=\frac{1}{Q_\nu} \sum_{a\in S} \sum^{\tau(a)-1}_{k=0} \nu(f^{-k}E\cap J_a)
\end{equation*}
for any Borel subset $E\subset Y$.
We denote  the class of liftable measures  by
\beq\label{def lift class}
\CM_L(f, Y):=\left\{\mu\in \CM(f, Y): \ 
\text{there is}\ \nu\in \CM(F, W) \ \text{with}\ \cL(\nu)=\mu
\right\}.
\eeq
The 
\emph{variational liftable pressure} of a function $\varphi: X\to \IR$ is defined by
\beq\label{def lift pressure}
P_L(\varphi):=\sup_{\mu\in \CM_L(f, Y)} P_\mu(\varphi),
\eeq
where $P_\mu(\varphi)$ is the \emph{pressure of $\varphi$ with respect to $\mu$} given by~\eqref{def Pmu}. 
Condition \textbf{(I4)} implies that  $\CM_L(f, Y)\ne \emptyset$  
and $P_L(\varphi)>-\infty$.  Note that it is possible that 
$P_L(\varphi)=+\infty$.
Also, $P_L(\varphi-q)=P_L(\varphi)-q$ for any $q\in \IR$.

A measure 
$\mu\in \CM_L(f, Y)$ is called an \emph{equilibrium measure} of $\varphi$ 
(in the class of $\CM_L(f, Y)$ of liftable measures)
if it attains the supremum of \eqref{def lift pressure}. 
Note that the potentials $\varphi-q$ and $\varphi$ share the 
same equilibrium measures for any $q\in \IR$.

We remark that the  $f$-invariant hull $Y$, 
the measure class $\CM_L(f, Y)$ and the variational liftable pressure $P_L(\varphi)$
depend on the choice of the inducing schemes $\{\cS, \tau\}$. 
A more independent quantity is the \emph{variational pressure} given by
\beq\label{def v pressure}
P^W(\varphi):=\sup\left\{  P_\mu(\varphi):  \ \mu\in \CM^W(f, X) 
\ \text{with} \ \int |\varphi| d\mu<\infty \right\},
\eeq
where the class $\CM^W(f, X)$ of measures is given by~\eqref{def CM*}.
Notice that the only requirement on $\CM^W(f, X)$
is to give positive mass on the inducing domain $W$.

\subsection{Regular and Nice Potentials} 

Condition \textbf{(I2)} allows one to define the \emph{coding map}
$\pi: S^\IZ\to \hW:=\bigcup_{a\in S} \overline{J_a}$ by
\beqn 
\pi(\ua):=x_0(\ua)=\bigcap_{n=-\infty}^\infty 
F^{-n}\overline{J_{a_n}}
\eeqn
for every bi-infinite sequence $\ua=(a_n)_{n\in\IZ} \in S^\IZ$. 
Note that $\pi$ is a semi-conjugacy from $(S^\IZ, \sigma)$ to $(\hW, F)$, 
i.e., $\pi\circ \sigma=F\circ \pi$,
and it is one-to-one on $\check{S}$.

Let $\varphi: X\to \IR$ be a potential function. 
In below, we list some verifiable conditions 
that were introduced in \cite{PS08, PSZ16}.

\begin{enumerate}
\item[\textbf{(P1)}] 
the \emph{induced potential} $\ovarphi: W\to \IR$ given by
\beq\label{def induced potential}
\ovarphi(x):=\sum^{\tau(x)-1}_{k=0} \varphi(f^k(x))
\eeq
can be extended
by continuity to a function on $\overline{J_a}$ for any $a\in S$;
\item[\textbf{(P2)}]
the function $\Phi=\ovarphi\circ \pi$ is a \emph{locally H\"older continuous} function 
on $S^\IZ$, i.e., there are $H>0$ and $0<r<1$ such that for all $n\ge 1$, 
\beqn
V_n(\Phi):=
\sup_{[\cb]\in \cC_{-n+1}^{n-1}}\ \sup_{\ab, \ab'\in [\cb]} \left\{\left|\Phi(\ab)-\Phi(\ab')\right| \right\}
\le H r^n,
\eeqn
where
$\cC_n^m$, with integers $n\le m$, is the collection of cylinders of the form 
\beqn
[\cb]=[c_n \dots  c_{m}]:=
\left\{\ua\in S^\IZ: \ a_k=c_k, \ k=n, \dots, m\right\}.
\eeqn 
 
\item[\textbf{(P3)}] 
the following summability condition holds:
\beqn
\ds \sum_{a\in S} \sup_{x\in  J_a} \exp \ovarphi(x)<\infty.
\eeqn
\end{enumerate}

By  Theorem 4.5 in \cite{PS08}, if $\varphi$ satisfies \textbf{(P1)-(P3)},
then $P_L(\varphi)<\infty$. 
We then define the \emph{normalized induced potential} $\varphi^+: W\to \IR$
by
\beqn
\varphi^+:=\overline{\varphi-P_L(\varphi)}=\ovarphi-P_L(\varphi)\tau.
\eeqn
\begin{enumerate}
\item[\textbf{(P4)}]
there exists $\eps>0$ such that 
\beqn
\ds\sum_{a\in S} \tau(a) \sup_{x\in  J_a} \exp (\varphi^+(x)+\eps \tau(x)) <\infty.
\eeqn
\end{enumerate}

Again by Theorem 4.5 in \cite{PS08}, if $\varphi$ satisfies \textbf{(P1)-(P4)},
the normalized induced potential $\varphi^+$ has zero topological pressure, and
there is a unique $F$-invariant equilibrium measure $\nu_{\varphi^+}$ for $\varphi^+$
among all the measures in $\CM(F, W)$. Moreover, 
$\nu_{\varphi^+}$ has the \emph{Gibbs property}, i.e.,
there exists a constant $K>0$ such that for any $n\ge 1$,
any cylinder $[\cb]\in \cC_0^{n-1}$ and any $x\in J_{\cb}:=\pi[\cb]$, 
we have 
\beqn
K^{-1}\le \dfrac{\nu_{\varphi^+}(J_{\cb})}{\exp \left(\sum_{k=0}^{n-1}\varphi^+(F^k(x))\right)} \le K.
\eeqn
In particular, for any $a\in S$ and any $x\in J_a$, we have 
\beq\label{Gibbs1}
K^{-1}\le\dfrac{\nu_{\varphi^+}(J_a)}{\exp \left( \varphi^+(x)\right)} \le K.
\eeq

\noindent\textbf{(P5)}
We say that $\nu_{\varphi^+}$
 has \emph{exponential tail}, if  
 there are $C>0$ and $\theta\in (0, 1)$ such that 
 for any $n\ge 1$, 
\beqn\label{nu exp tail}
\nu_{\varphi^+}(\{x\in W: \ \tau(x)\ge n\})\le C\theta^n.
\eeqn

\medskip

We now introduce the following definitions for potential functions.

\begin{definition}
\label{def regular nice}
We say that a function $\varphi: X\to \IR$ is  
\begin{itemize}
\item[(1)] \emph{regular} if $\varphi$ satisfies Conditions \textbf{(P1)(P2)};
\item[(2)] \emph{nice} if $\varphi$ satisfies Conditions \textbf{(P1)-(P4)}; 
\item[(3)] \emph{strongly nice} if $\varphi$ satisfies Conditions \textbf{(P1)-(P5)}.
\end{itemize}
\end{definition}

\medskip

We remark that the above properties depend on the choice of the inducing scheme.
It is possible that a function is regular with respect to one inducing scheme but not regular
with respect to some others. 
We also stress that the induced potential 
$\ovarphi$ may be unbounded even if $\varphi$ is a regular potential.

\subsection{Thermodynamics of Inducing Schemes}

Let us describe some statistical properties of $\mu\in \CM(f, X)$. 
We say that $\mu$ has \emph{exponential decay of correlations}
with respect to a class $\cH$ of functions if there exists $\Theta\in (0, 1)$
such that for any $h_1, h_2\in \cH$ and any $n\ge 1$,
\beqn
\left| \int h_1 \cdot h_2\circ f^n d\mu - \int h_1 d\mu \int h_2 d\mu \right|\le K\Theta^n,
\eeqn
for some $K=K(h_1, h_2)>0$. 
We say that $\mu$ satisfies the \emph{Central Limit Theorem} with respect to the class $\cH$ 
if for any $h\in \cH$, which is not cohomologous to a constant,
there exists $\sigma> 0$ such that 
\beqn
\frac{1}{\sqrt n} \ds \sum_{k=0}^{n-1} \left( h\circ f^k -  \int h d\mu \right) 
\longrightarrow N(0, \sigma^2) \ \ \text{in law},
\eeqn
where $N(0, \sigma^2)$ denotes the normal distribution.

We now state some main results by Pesin, Senti and Zhang~\cite{PS08, PSZ16}
on the thermodynamic formalism of inducing schemes.
We also include the result on the Bernoulli property established by Shahidi and Zelerowicz \cite{ShZe19}.
Although we are dealing with maps with discontinuities and singularities in this paper,
the proof of the following theorem can still be carried out
by the same arguments.

\begin{thm}\label{thm: is0} 
Let $\{S, \tau\}$ be an inducing scheme, and 
assume that the potential function $\varphi$ is nice.
Then 
\begin{enumerate}
\item[(1)] 
there exists a unique equilibrium measure $\mu_{\varphi}$
for $\varphi$ among all liftable measures in $\CM_L(f, Y)$;
\item[(2)] 
if the inducing scheme $\{\cS, \tau\}$ satisfies the aperiodic condition~\eqref{def aperiodic}, 
then the meausre $\mu_\varphi$ is mixing and in fact Bernoulli.
Furthermore, if
$\nu_{\varphi^+}=\cL^{-1}(\mu_\varphi)$ has exponential tail, 
then $\mu_\varphi$ has exponential decay of correlations and satisfies the 
Central Limit Theorem (with respect to a class of functions 
which contains all bounded  H\"older continuous functions on $Y$).
\end{enumerate}
\end{thm}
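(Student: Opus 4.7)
My plan is to reduce everything to the thermodynamic formalism on the countable Markov shift $(S^{\IZ},\sigma)$ via the coding map $\pi$, then push the resulting measure down to $W$ and suspend via the Abramov formula to obtain a measure on $Y$. Concretely, set $\Phi^+:=\varphi^+\circ\pi$ where $\varphi^+=\ovarphi-P_L(\varphi)\tau$ is the normalized induced potential. Conditions \textbf{(P1)}--\textbf{(P4)} together with Definition~\ref{def is} tell us that $\Phi^+$ is locally H\"older continuous on $S^{\IZ}$ with summable variations, and the summability in \textbf{(P3)}--\textbf{(P4)} gives the Big Images and Pre-images (BIP) / positive recurrence and the topological pressure normalization $P_{\mathrm{top}}(\Phi^+)=0$. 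I would then invoke Sarig's theorem on countable Markov shifts to obtain a unique shift-invariant Gibbs equilibrium state $\tnu$ for $\Phi^+$, and push it forward by $\pi$: using \textbf{(I2)} and \textbf{(I3)} (which say $\pi$ is a measurable bijection on $\check S$ and that $S^{\IZ}\setminus \check S$ is invisible to any such ergodic measure) I get an $F$-invariant equilibrium state $\nu_{\varphi^+}\in\CM(F,W)$ with the Gibbs property \eqref{Gibbs1}.

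Next I would use Kac's formula together with the summability in \textbf{(P4)} to show $Q_{\nu_{\varphi^+}}=\int \tau\,d\nu_{\varphi^+}<\infty$, so that the lift $\mu_\varphi:=\cL(\nu_{\varphi^+})\in \CM_L(f,Y)$ is well defined. The Abramov formulas
\[
h_{\mu_\varphi}(f)=\dfrac{h_{\nu_{\varphi^+}}(F)}{Q_{\nu_{\varphi^+}}},\qquad \int\varphi\,d\mu_\varphi=\dfrac{1}{Q_{\nu_{\varphi^+}}}\int\ovarphi\,d\nu_{\varphi^+},
\]
imply $P_{\mu_\varphi}(\varphi)-P_L(\varphi)=Q_{\nu_{\varphi^+}}^{-1}\bigl(h_{\nu_{\varphi^+}}(F)+\int\varphi^+\,d\nu_{\varphi^+}\bigr)$. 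Conversely, any $\mu\in\CM_L(f,Y)$ corresponds to some $\nu\in\CM(F,W)$ with $Q_\nu<\infty$; the same Abramov identities turn the supremum over $\CM_L(f,Y)$ into a supremum over $\CM(F,W)$ of the free energy of $\varphi^+$. The fact that $\nu_{\varphi^+}$ is the unique maximizer of the latter (zero and attained) together with $Q_{\nu_{\varphi^+}}<\infty$ yields both existence and uniqueness in part~(1). The place where discontinuities and singularities need care is that $F$ is only defined up to a measurable set and that $\ovarphi$ may blow up; here the uniform control \textbf{(P1)}--\textbf{(P2)} on each $\overline{J_a}$ and condition \textbf{(I3)} ensure that every step stays entirely inside the measurable-ergodic category and we never need topological pressure on~$X$.

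For part~(2), under the aperiodic condition \eqref{def aperiodic} the countable Markov shift $(S^{\IZ},\sigma)$ is topologically mixing, so by Sarig's results $\tnu$, and hence $\nu_{\varphi^+}$, is mixing for $F$. The standard suspension argument (the induced map $F$ is a first-return-type map to $W$ with return time $\tau$) upgrades mixing of $\nu_{\varphi^+}$ to mixing of $\mu_\varphi$ on $Y$; for the Bernoulli conclusion I would cite the criterion of Shahidi--Zelerowicz \cite{ShZe19}, whose hypotheses (Gibbs equilibrium state on a topologically mixing countable Markov shift with finite return-time integral) are exactly what we have verified. Finally, assuming the exponential tail bound \textbf{(P5)} on $\nu_{\varphi^+}$, I would realize $(f,\mu_\varphi)$ as a Young tower over $(F,\nu_{\varphi^+})$ with exponentially decaying return-time tail; Young's theorem then delivers exponential decay of correlations and the CLT for the class of observables whose pull-back to the tower is H\"older in the separation metric, which contains all bounded H\"older continuous functions on $Y$.

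The main obstacle I expect is keeping the variational picture honest in the presence of discontinuities: checking that the symbolic equilibrium state does project to a genuine $F$-invariant measure on $W$ (not merely on $\hat W$), that the Abramov identities apply without loss of mass through the singular set, and that the Kac integrability $Q_{\nu_{\varphi^+}}<\infty$ really follows from \textbf{(P4)} rather than from continuity. These are precisely the points where a direct quotation of \cite{PSZ16} is not possible and where the measurable formulation of Conditions \textbf{(I1)}--\textbf{(I4)} must be used carefully.
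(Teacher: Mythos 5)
Your plan is essentially the paper's own route: the paper does not reprove this theorem but invokes the Pesin--Senti--Zhang machinery (Sarig's formalism on the countable shift via the coding map, push-forward of the Gibbs equilibrium state, Kac/Abramov lifting and uniqueness, Shahidi--Zelerowicz for the Bernoulli property, and Young towers with exponential tails for decay of correlations and the CLT), asserting that the same arguments carry over to the discontinuous/singular setting. Your reconstruction, including the points of care about the measurable formulation of \textbf{(I1)}--\textbf{(I4)} and the finiteness of $Q_{\nu_{\varphi^+}}$, matches that approach.
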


\medskip

Recall that $P_L(\cdot)$ is the variational lifted pressure given by \eqref{def lift pressure}.
The following theorem was established by Shahidi and Zelerowicz \cite{ShZe19},
which provides a criterion for the analyticity of the variational lifted pressure.

\begin{thm}\label{thm: SZ}
Let $\varphi_1$ and $\varphi_2$ be two potential functions on $X$.
Assume there exists $\eps_0>0$ such that   $\varphi_1+s\varphi_2$ is nice 
for all $|s|<\eps_0$.
Then for some $0<\eps<\eps_0$, the function $s\mapsto P_L(\phi_1+s\phi_2)$ 
is real analytic in $(-\eps, \eps)$. 
\end{thm}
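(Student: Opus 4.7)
The plan is to reduce the claim to the analyticity of the Gurevich topological pressure on the associated countable Markov shift $(S^\IZ,\sigma)$ and then invoke the implicit function theorem on an Abramov-type equation that characterizes $P_L$.

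First I transfer everything to the symbolic space. Set $\Phi_i:=\overline{\varphi_i}\circ\pi$ for $i=1,2$ and $\Phi_s:=\Phi_1+s\Phi_2$; by Condition \textbf{(P2)}, each $\Phi_s$ is locally H\"older continuous on $S^\IZ$ with H\"older data depending continuously on $s$. For every $s\in(-\eps_0,\eps_0)$ the potential $\varphi_1+s\varphi_2$ is nice, so the results quoted from Theorem~4.5 of \cite{PS08} give $P_L(\varphi_1+s\varphi_2)<\infty$ and tell us that the normalized induced potential $(\varphi_1+s\varphi_2)^+=\Phi_s-q_s\tau$, with $q_s:=P_L(\varphi_1+s\varphi_2)$, has Gurevich topological pressure equal to zero and admits a unique shift-invariant Gibbs equilibrium measure $\nu_s$. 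Defining
\beq
G(s,q):=P^\sigma\!\left(\Phi_s-q\tau\right),
\eeq
the number $q_s$ is therefore the unique solution to $G(s,q)=0$ for each $s\in(-\eps_0,\eps_0)$.

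Next I exploit analytic perturbation theory for the Ruelle transfer operator on countable Markov shifts. Because each $\Phi_s-q_s\tau$ is locally H\"older and satisfies the summability built into \textbf{(P3)(P4)}, the associated transfer operator acts on a suitable Banach space of locally H\"older observables with a simple, isolated leading eigenvalue equal to $e^{G(s,q)}$. Uniformity of \textbf{(P3)(P4)} on compact subsets of $(-\eps_0,\eps_0)$ guarantees an open neighborhood $U\subset\IR^2$ of $\{(s,q_s):s\in(-\eps_0,\eps_0)\}$ on which the spectral gap persists, the family $(s,q)\mapsto\Phi_s-q\tau$ extends to a complex-analytic family of operators, and the leading eigenvalue moves analytically in $(s,q)$; consequently $G$ is jointly real analytic on $U$. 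The standard differentiation formula then gives
\beqn
\partial_qG(s,q_s)=-\int\tau\,d\nu_s=-Q_{\nu_s},
\eeqn
with $Q_{\nu_s}$ as in~\eqref{def Qnu}. This quantity is finite thanks to the $\tau$-integrability built into \textbf{(P4)} and is at most $-1$ since $\tau\ge 1$. The implicit function theorem applied at $(0,q_0)$ therefore yields a real analytic function $s\mapsto q_s=P_L(\varphi_1+s\varphi_2)$ on some $(-\eps,\eps)\subset(-\eps_0,\eps_0)$, which is the desired conclusion.

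The main obstacle is the joint real analyticity of $G$ on $U$. Convexity of the Gurevich pressure only furnishes continuity (in fact Lipschitzness) of $G$; genuine real analyticity requires that the perturbed family of transfer operators retain the spectral-gap structure as $(s,q)$ ranges over $U$, and that the leading eigenvalue depend holomorphically on $(s,q)$ after complexification. Because $\tau$ is unbounded, the region of $(s,q)$ where the operator has finite spectral radius and a spectral gap is a nontrivial subset of $\IR^2$. The niceness hypothesis for every $|s|<\eps_0$ is precisely what ensures that the curve $q=q_s$ lies in the interior of this region, opening up the neighborhood $U$ on which analytic perturbation theory applies.
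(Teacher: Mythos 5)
The paper offers no proof of this theorem at all---it is quoted directly from Shahidi--Zelerowicz \cite{ShZe19}---and your argument follows essentially the same route as that source: characterize $P_L(\varphi_1+s\varphi_2)$ as the unique zero $q_s$ of the induced Gurevich pressure $G(s,q)=P^\sigma\bigl(\overline{\varphi_1+s\varphi_2}\circ\pi-q\tau\bigr)$, obtain joint real analyticity of $G$ from spectral-gap perturbation theory for the Ruelle operator on the countable full shift (after the standard reduction from the two-sided to the one-sided shift), and apply the implicit function theorem using $\partial_q G(s,q_s)=-Q_{\nu_s}\le -1$. This matches the cited proof in both structure and key steps; the only points you would still need to flesh out are the two-sided-to-one-sided reduction and the verification that niceness at every $|s|<\eps_0$ supplies the summability margin making the operator family analytic in a complex neighborhood of the curve $(s,q_s)$, both of which go through exactly as you indicate.
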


\section{Thermodyanmic Formalism}
 
\subsection{Auxiliary functions}

Let $\varphi$ be a strongly nice potential satisfying Conditions \textbf{(P1)-(P5)}
and let $\psi$ be a regular potential satisfying Conditions \textbf{(P1)(P2)}.

We need the following auxiliary functions.  
For any $t\in (-\infty, \infty)$, we set
\beqn
\rho_a(t):=\sup_{x\in J_a} \exp \left(\overline{\varphi+t\psi} (x) \right),  
\eeqn
where $\overline{\varphi+t\psi}$ is 
the induced potentials of $\varphi+t\psi$. 
Recall that $S_n$ is the set defined by \eqref{def Sn}
and $U_n(\cdot)$ is given by Definition~\ref{def finite psi comp}.
For any $n\ge 1$, we set
\beq\label{def un}
u_n(t):= U_n\left(\overline{\varphi+t\psi}\right)=
\sum_{a\in S_n}\rho_a(t).
\eeq
We allow $u_n(t)=\infty$ if the above sum diverges,
and we set $u_n(t)=0$ if $S_n$ is empty.  Hence $u_n$ is a function 
from $(-\infty, \infty)$ to $[0, \infty]$.

By convention, we set $\log 0=-\infty$ and $\log\infty=\infty$. 
Recall that  $\cK(\cdot)$ is the complexity function given by Definition~\ref{def finite psi comp}.
We define a function $\kappa: (-\infty, \infty) \to [-\infty, \infty]$ by
\beq\label{define kappa}
\kappa(t):= \cK\left(\overline{\varphi+t\psi}\right)=
\limsup_{n\to \infty} \frac{1}{n}\log u_n(t).
\eeq
  
\begin{lemma}\label{lem: kappa}
The  following statements hold:
\begin{itemize}
\item[(1)] The function $\kappa(t)$ is convex  on $(-\infty, \infty)$;
\item[(2)] If Condition \textbf{(H)} holds, then 
\begin{itemize}
\item[(a)]
$u_n(t)<\infty$ for any $t\in [\tb_0, \bt_0]$ and any $n\ge 1$;
\item[(b)]
either $\kappa\equiv -\infty$ or 
$\kappa$ is a real-valued continuous function on $(\tb_0, \bt_0)$;
\end{itemize}
\item[(3)]  $\kappa(0)<p:=P_L(\varphi)<\infty$.
\end{itemize}
\end{lemma}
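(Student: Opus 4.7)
The plan is to handle the three parts sequentially, each via a convexity or summability argument one level below the statement. For part (1), I would exploit the affine dependence $\overline{\varphi+t\psi}(x) = \overline{\varphi}(x) + t\overline{\psi}(x)$ for each fixed $x$. Then $\log \rho_a(t) = \sup_{x \in J_a}\bigl(\overline{\varphi}(x) + t\overline{\psi}(x)\bigr)$ is a supremum of affine functions, hence convex in $t$, so each $\rho_a$ is log-convex. H\"older's inequality promotes this to $u_n$: for $\lambda \in [0,1]$,
\[
u_n\bigl(\lambda s + (1-\lambda)s'\bigr) \;\le\; \sum_{a\in S_n} \rho_a(s)^\lambda \rho_a(s')^{1-\lambda} \;\le\; u_n(s)^\lambda u_n(s')^{1-\lambda}.
\]
Thus $\log u_n$ is convex on $(-\infty,\infty)$, and since the pointwise $\limsup$ of convex $[-\infty,\infty]$-valued functions is again convex, $\kappa$ is convex.

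For (2)(a), (H) supplies $u_n(\tb_0), u_n(\bt_0) < \infty$ for every $n$, and the log-convexity just established interpolates this to the whole interval $[\tb_0, \bt_0]$. For (2)(b), (H) also yields $\kappa(\tb_0), \kappa(\bt_0) < \infty$, so each endpoint value lies in $[-\infty, \infty)$. The dichotomy then follows from the convexity of $\kappa$ on $[\tb_0,\bt_0]$: if both endpoint values are finite, then $\kappa$ is a finite-valued convex function on the interval and hence continuous on its interior; if at least one endpoint value equals $-\infty$, the convention $\lambda \cdot (-\infty) = -\infty$ for $\lambda > 0$, combined with $\kappa(t) \le \lambda \kappa(\tb_0)+(1-\lambda)\kappa(\bt_0)$, forces $\kappa \equiv -\infty$ on $(\tb_0, \bt_0)$.

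For part (3), finiteness of $p = P_L(\varphi)$ is Theorem~4.5 of \cite{PS08} under (P1)--(P3). To obtain the strict inequality $\kappa(0) < p$, I would rewrite the (P4) summability using $\varphi^+ = \overline{\varphi} - p\tau$: for any $a \in S_n$ and $x \in J_a$,
\[
\exp\bigl(\varphi^+(x) + \eps\tau(x)\bigr) \;=\; e^{(\eps-p)n}\exp(\overline{\varphi}(x)),
\]
so (P4) becomes $\sum_{n\ge 1} n\,e^{(\eps-p)n} u_n(0) < \infty$. This forces $n\,e^{(\eps-p)n} u_n(0)$ to be bounded; taking logarithm and dividing by $n$ yields $\limsup_n \frac{1}{n}\log u_n(0) \le p - \eps$, i.e., $\kappa(0) \le p - \eps < p$.

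The arguments are essentially routine once the log-convexity of $u_n$ is in hand. The main bookkeeping subtlety will be the $[-\infty,\infty)$-valued convexity argument in (2)(b); beyond that, each remaining step is either a direct application of H\"older's inequality or an immediate rewrite of one of the built-in summability conditions (P3), (P4) for a strongly nice potential.
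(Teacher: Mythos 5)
Your proposal is correct, and it follows the same skeleton as the paper (log-convexity of $u_n$ via H\"older, interpolation for (2)(a), convexity for (2)(b)), but two steps take a genuinely different and in fact leaner route. In part (1), you obtain $\rho_a(\lambda s+(1-\lambda)s')\le \rho_a(s)^\lambda\rho_a(s')^{1-\lambda}$ directly from the fact that $\log\rho_a(t)=\sup_{x\in J_a}(\ovarphi(x)+t\opsi(x))$ is a supremum of affine functions; the paper instead introduces the auxiliary sums $v_n(t)=\sum_{a\in S_n}\rho_a\xi_a^t$ and uses the oscillation bound from \textbf{(P2)} for $\psi$ to compare $u_n$ with $v_n$ up to a factor $e^{H|t|}$. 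Your version gives the clean inequality $u_n(\lambda s+(1-\lambda)s')\le u_n(s)^\lambda u_n(s')^{1-\lambda}$ without any use of \textbf{(P2)}, so it is both simpler and marginally stronger (the paper's $v_n$, however, is reused later in the proof of finiteness of $\lambda=\int\psi\,d\mu_\varphi$, which is why it appears there). In part (3), you extract $\kappa(0)\le p-\eps$ directly from \textbf{(P4)}, rewritten as $\sum_n n\,e^{(\eps-p)n}u_n(0)<\infty$, whereas the paper uses the Gibbs property of $\nu_{\varphi^+}$ together with the exponential tail \textbf{(P5)} to get $u_n(0)\le CKe^{pn}\theta^n$; your argument is correct and shows the strict inequality already for nice (not strongly nice) potentials, which is a nice economy. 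One small caution in (2)(b): keying the dichotomy on the endpoint values is slightly off, since a convex extended-real function can be finite at $\tb_0$ and $\bt_0$ yet equal $-\infty$ at interior points, so "both endpoints finite $\Rightarrow$ $\kappa$ finite on the interval" is not literally valid; the clean split is on whether $\kappa=-\infty$ at some interior point (in which case convexity plus $\kappa<\infty$ on $[\tb_0,\bt_0]$ forces $\kappa\equiv-\infty$ on $(\tb_0,\bt_0)$) or not (in which case $\kappa$ is real-valued convex, hence continuous). This does not affect the truth of the lemma, and the paper's own wording at this point is comparably brisk.
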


\begin{proof} (1) For any $n\in \IN$, we set 
\beq\label{def vn}
v_n(t)=\sum_{a\in S_n} \rho_a \xi_a^t
\eeq
for all $t\in (-\infty, \infty)$, 
where
\beq\label{def rho xi}
\rho_a:=\rho_a(0)=\sup_{x\in J_a} \exp \ovarphi(x) \ \ \text{and} \ \
\xi_a:=\sup_{x\in J_a} \exp \opsi(x).
\eeq
Since the potential $\psi$ satisfies 
Condition \textbf{(P2)}, there is $H>0$ such that for any $a\in S$,  we have
\beq\label{psi osc}
\ds \sup_{x\in J_a} \opsi(x) - \inf_{x\in J_a} \opsi(x) \le H,
\eeq
and hence for any $t\in\IR$,  
\beq\label{relation uv}
v_n(t) e^{-H |t|}\le u_n(t)\le v_n(t) e^{H|t|}.
\eeq
For any $t_1, t_2\in (-\infty, \infty)$ and $\alpha\in (0, 1)$, by the H\"older inequality,
\beqn
v_n(\alpha t_1 + (1-\alpha) t_2)
=\sum_{a\in S_n} \left[\rho_a \xi_a^{ t_1}\right]^{\alpha} \left[ \rho_a \xi_a^{t_2}\right]^{1-\alpha}
\le v_n(t_1)^\alpha v_n(t_2)^{1-\alpha},
\eeqn
and hence 
\beq\label{convex un}
u_n(\alpha t_1 + (1-\alpha) t_2)
\le u_n(t_1)^\alpha u_n(t_2)^{1-\alpha} e^{2H\left[\alpha|t_1| + (1-\alpha)|t_2| \right]}.
\eeq
Taking $\limsup\limits_{n\to\infty} \frac{1}{n}\log$ on both sides of \eqref{convex un}, we have that
\beqn
\kappa(\alpha t_1 + (1-\alpha) t_2)\le \alpha \kappa(t_1) + (1-\alpha) \kappa(t_2).
\eeqn
Hence the function $\kappa$ is convex on $(-\infty, \infty)$.

(2)  It immediately follows from Condition \textbf{(H)} that 
\beqn
\begin{split}
& u_n(\tb_0)<\infty \ \text{for all} \ n\ge 1, \ \ \text{and} \ \ \kappa(\tb_0)<\infty, \\
& u_n(\bt_0)<\infty \ \text{for all} \ n\ge 1, \ \ \text{and} \ \ \kappa(\bt_0)<\infty.
\end{split}
\eeqn
For any $n\ge 1$ and $t\in (\tb_0, \bt_0)$, we can write $t=\alpha \tb_0 + (1-\alpha)\bt_0$ for some $\alpha\in (0, 1)$,
thus $u_n(t)<\infty$ by \eqref{convex un}. 

Furthermore, 
by the convexity of $\kappa$, if $\kappa\not\equiv -\infty$,
then we must have $\kappa(t)>-\infty$ for all $t\in (-\infty, \infty)$.
Since $\kappa(\tb_0)<\infty$ and $\kappa(\bt_0)<\infty$, by convexity of $\kappa$,
we get $\kappa(t)<\infty$ for any $t\in [\tb_0, \bt_0]$.
In other words, 
$\kappa$ is a real-valued convex function on $[\tb_0, \bt_0]$,
and hence $\kappa$ is continuous in the interior $(\tb_0, \bt_0)$.

(3) Recall that $\varphi$ is a strongly nice potential, that is, $\varphi$
satisfies Conditions  \textbf{(P1)-(P5)}.
Conditions \textbf{(P1)-(P3)} imply that $p:=P_L(\varphi)<\infty$.
Together with Condition \textbf{(P4)}, we have that
$\varphi^+=\ovarphi - p \tau$ has a unique $F$-invariant equilibrium measure
$\nu_{\varphi^+}$.
By \eqref{Gibbs1}, there is $K>0$ such that for any $a\in J$,
\beq\label{Gibbs2}
K^{-1}\le \dfrac{\nu_{\varphi^+}(J_a) }{ \rho_a  e^{-p\tau(a)}}\le K,
\eeq
where $\rho_a$ is given by \eqref{def rho xi}. 
Condition  \textbf{(P5)} further says that $\nu_{\varphi^+}$ has exponential tail, i.e.,
there are $C>0$ and $\theta\in (0, 1)$ such that
\beqn
u_n(0)=\sum_{a\in S_n} \rho_a \le Ke^{p n} 
\nu_{\varphi^+}\left(\{x\in W: \tau(x)=n\}\right)
\le CK e^{pn} \theta^n.
\eeqn
Hence
\beqn
\kappa(0)=\limsup\limits_{n\to\infty} \frac{1}{n}\log u_n(0)\le p+\log\theta<p.
\eeqn

The proof of this lemma is complete.
\end{proof}

\subsection{Proof of Theorem~A}

Let $\varphi$ be a strongly nice potential satisfying Conditions \textbf{(P1)-(P5)}
and let $\psi$ be a regular potential satisfying Conditions \textbf{(P1)(P2)}.

We shall consider the potentials of the form
$\varphi+t\psi-q_t$ for all $t\in \IR$, where $q_t$ is a constant with regard to $x$ 
(see the precise formula in \eqref{def qt}).  
Note that the equilibrium measures of $\varphi+t\psi-q_t$
are the same as those of $\varphi+t\psi$ in the liftable class $\CM_L(f, Y)$.

\begin{proposition}\label{lem: check P1-2} For any $t\in \IR$ and any  $q\in \IR$, 
the potential $\varphi+t\psi-q$ satisfies Conditions \textbf{(P1)} and \textbf{(P2)}.
\end{proposition}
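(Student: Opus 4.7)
The plan is to reduce everything to the linearity of the inducing operation and the fact that $\tau$ is constant on each $J_a$, so that combining the two hypotheses on $\varphi$ and $\psi$ (both are, at minimum, regular) yields the desired properties for the new potential $\varphi+t\psi-q$.

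First I would write out the induced potential of $\varphi+t\psi-q$ using \eqref{def induced potential}. Since $\tau$ is by definition constant on each block $J_a$, the induced potential decomposes as
\beqn
\overline{\varphi+t\psi-q}(x)=\ovarphi(x)+t\,\opsi(x)-q\,\tau(a),\qquad x\in J_a.
\eeqn
For Condition \textbf{(P1)}, both $\ovarphi$ and $\opsi$ extend continuously to $\overline{J_a}$ (because $\varphi$ satisfies \textbf{(P1)} as part of being strongly nice, and $\psi$ satisfies \textbf{(P1)} as part of being regular), while $q\,\tau(a)$ is simply a constant on $\overline{J_a}$; a linear combination of functions that extend continuously also extends continuously, so \textbf{(P1)} is immediate.

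For Condition \textbf{(P2)}, write $\Phi_\varphi:=\ovarphi\circ\pi$ and $\Phi_\psi:=\opsi\circ\pi$, both of which are locally H\"older by hypothesis: there exist $H_1,H_2>0$ and $r_1,r_2\in(0,1)$ with $V_n(\Phi_\varphi)\le H_1 r_1^n$ and $V_n(\Phi_\psi)\le H_2 r_2^n$ for all $n\ge 1$. The function $\tau\circ\pi$ depends only on the coordinate $a_0$, hence is constant on every cylinder $[\cb]\in\cC_{-n+1}^{n-1}$ as soon as $n\ge 1$, giving $V_n(\tau\circ\pi)=0$. Using the triangle inequality for $V_n$, letting $r:=\max(r_1,r_2)\in(0,1)$ and $H:=H_1+|t|H_2$, I obtain
\beqn
V_n\bigl((\overline{\varphi+t\psi-q})\circ\pi\bigr)\le V_n(\Phi_\varphi)+|t|\,V_n(\Phi_\psi)+|q|\,V_n(\tau\circ\pi)\le H\,r^n,
\eeqn
which establishes \textbf{(P2)}.

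There is essentially no hard step here; the only thing worth noting is that the resulting H\"older constant $H$ depends on $t$, so this statement is not uniform in $t$, but that is not required for \textbf{(P2)}. The constant $q$ completely disappears from the H\"older estimate because $\tau\circ\pi$ is a cylinder function of depth one, which is precisely why subtracting a multiple of $\tau$ does not disturb regularity.
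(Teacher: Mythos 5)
Your proof is correct and follows essentially the same route as the paper: decompose $\overline{\varphi+t\psi-q}=\ovarphi+t\opsi-q\tau$, invoke \textbf{(P1)(P2)} for $\ovarphi$ and $\opsi$, and observe that $\tau$ is constant on each $J_a$ so that $\tau\circ\pi$ is trivially locally H\"older. Your explicit computation $V_n(\tau\circ\pi)=0$ and the bound $H=H_1+|t|H_2$, $r=\max(r_1,r_2)$ just spell out what the paper leaves implicit.
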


\begin{proof}
Note that $\ds \overline{\varphi+t\psi-q}=\ovarphi +t\opsi - q\tau$,
and $\ovarphi$ and $\opsi$ both satisfy Conditions \textbf{(P1)} and \textbf{(P2)}. 
Since $\tau$ is constant on each $J_a$, it is continuous on $\overline{J_a}$
and $\tau\circ \pi$ is automatically locally H\"older continuous. 
Therefore, the potential $\varphi+t\psi-q$ satisfies Conditions \textbf{(P1)} and \textbf{(P2)}.
\end{proof}

In the rest of this section, we show that

\begin{proposition}\label{lem: check P3-5}
Under Condition \textbf{(H)}, 
there exist $\tb<0<\bt$ such that for all $t\in (\tb, \bt)$, there exists $q_t\in \IR$ so that
the potential $\varphi+t\psi-q_t$ satisfies Conditions \textbf{(P3)-(P5)}.
\end{proposition}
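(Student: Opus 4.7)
The plan is to set $q_t := P_L(\varphi+t\psi)$ and verify that, for $t$ in a small interval $(\underline{t},\overline{t})$ around $0$, the shifted potential $\varphi+t\psi-q_t$ satisfies (P3)--(P5). First, $q_t$ is finite: for any $t\in(\tb_0,\bt_0)$ and any $q>\kappa(t)$, the potential $\varphi+t\psi-q$ fulfills (P1) and (P2) by Proposition~\ref{lem: check P1-2} and (P3) by the definition of $\kappa$, so Theorem~4.5 of \cite{PS08} gives $P_L(\varphi+t\psi)-q=P_L(\varphi+t\psi-q)<\infty$. Since $P_L(\varphi+t\psi-q_t)=0$ by construction, the normalized induced potential of $\varphi+t\psi-q_t$ is exactly $\overline{\varphi+t\psi}-q_t\tau$, and one checks directly that (P3), (P4), (P5) all follow from a single strict inequality
\[
\kappa(t)+\varepsilon < q_t
\]
with a uniform slack $\varepsilon>0$: (P3) reads $\sum_n u_n(t)e^{-q_t n}<\infty$, (P4) reads $\sum_n n\,u_n(t)e^{-(q_t-\varepsilon)n}<\infty$, and (P5) reduces, via the Gibbs bound \eqref{Gibbs2}, to exponential decay of the tail $\sum_{m\ge n} u_m(t)e^{-q_t m}$.

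The remaining task is to produce the gap $\kappa(t)+\varepsilon<q_t$ near $t=0$. I would first show the integrability $\int|\psi|\,d\mu_\varphi<\infty$, where $\mu_\varphi=\cL(\nu_{\varphi^+})$ is the lifted equilibrium measure for $\varphi$ guaranteed by strong niceness. By Kac's formula this reduces to $\int|\overline\psi|\,d\nu_{\varphi^+}<\infty$, and via the elementary bound $|x|\le s^{-1}(e^{sx}+e^{-sx})$ it suffices to bound $\int e^{\pm s\overline\psi}\,d\nu_{\varphi^+}$ for some small $s>0$. Using the Gibbs inequality $\nu_{\varphi^+}(J_a)\le K\rho_a e^{-p\tau(a)}$ together with the pointwise estimate $\sup_{J_a}e^{\pm s\overline\psi}\le e^{Hs}\xi_a^{\pm s}$ (from \eqref{psi osc}) and the comparison \eqref{relation uv}, one obtains
\[
\int e^{\pm s\overline\psi}\,d\nu_{\varphi^+} \;\le\; Ke^{Hs}\sum_n u_n(\pm s)e^{-pn},
\]
which is finite for all $s>0$ small enough that $\kappa(\pm s)<p$. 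Such $s$ exists by the continuity of $\kappa$ at $0$ (Lemma~\ref{lem: kappa}(2)) combined with the strict inequality $\kappa(0)<p$ (Lemma~\ref{lem: kappa}(3)).

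Once integrability is in hand, the variational principle applied to $\mu_\varphi$ gives the linear-in-$t$ lower bound
\[
q_t = P_L(\varphi+t\psi) \;\ge\; P_{\mu_\varphi}(\varphi+t\psi) \;=\; p + t\int\psi\,d\mu_\varphi,
\]
which equals $p$ at $t=0$. Since $\kappa$ is continuous on $(\tb_0,\bt_0)$ with $\kappa(0)<p$, one can choose $\tb<0<\bt$ inside $(\tb_0,\bt_0)$ and a uniform $\varepsilon>0$ so that $\kappa(t)+\varepsilon<p+t\int\psi\,d\mu_\varphi\le q_t$ for all $t\in(\tb,\bt)$, completing the proof by the reduction in the first paragraph. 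The explicit formula for $\tb,\bt$ would be read off from the magnitudes of $p-\kappa(0)$ and $\int\psi\,d\mu_\varphi$.

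The main obstacle is the integrability step: $\psi$ is only regular, so $\overline\psi$ may be unbounded, and the two-sided exponential moment of $\overline\psi$ against the Gibbs measure must be extracted purely from the strict inequality $\kappa(0)<p$ and the Gibbs/exponential-tail structure of $\nu_{\varphi^+}$. Everything else---the verification of (P1)(P2), the well-definedness and upper bound for $q_t$, and the passage from the gap $\kappa(t)<q_t$ to (P3)--(P5)---is standard bookkeeping with countable Markov shift estimates already collected in Section~2.
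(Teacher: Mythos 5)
Your proposal is correct and follows essentially the same route as the paper: finiteness of $\lambda=\int\psi\,d\mu_\varphi$ via Kac's formula, the Gibbs property of $\nu_{\varphi^+}$ and the finite weighted complexity at parameters near $0$; the variational lower bound $P_L(\varphi+t\psi)\ge p+\lambda t$; the gap $\kappa(t)<p+\lambda t$ on an interval around $0$ from continuity/convexity of $\kappa$ and $\kappa(0)<p$; and then the series estimates $\sum_n u_n(t)e^{-q_tn}$, $\sum_n n\,u_n(t)e^{(\eps-q_t)n}$ and the Gibbs tail bound for \textbf{(P3)--(P5)}. Your normalization $q_t=P_L(\varphi+t\psi)$ rather than the paper's linear $q_t=p+\lambda t$ is an inessential variant, since the normalized induced potential $\overline{\varphi+t\psi}-P_L(\varphi+t\psi)\tau$ is the same in both cases and the paper likewise only uses $p+\lambda t$ as a lower bound for $P_L(\varphi+t\psi)$.
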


From now on, we shall assume that Condition \textbf{(H)} holds.
To prove Proposition \ref{lem: check P3-5}, we need the following preparations.
Set
\beq\label{def lambda}
\lambda:=\int \psi d\mu_\varphi,
\eeq
where  $\mu_\varphi$ is the equilibrium measure for the strongly nice potential $\varphi$,
which is ensured by Theorem~\ref{thm: is0}. 

\begin{lemma} 
The number $\lambda$ given in \eqref{def lambda} is well defined, and
$\ds -\infty<\lambda<\infty$. 
\end{lemma}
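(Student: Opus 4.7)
The plan is to reduce the integrability of $\psi$ against $\mu_\varphi$ to the integrability of the induced potential $\opsi$ against $\nu_{\varphi^+}$, and then to establish the latter by combining the Gibbs property of $\nu_{\varphi^+}$ with the finite weighted complexity at $\tb_0 < 0 < \bt_0$ furnished by hypothesis \textbf{(H)}.

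First, I would invoke Theorem \ref{thm: is0} applied to the strongly nice potential $\varphi$: this produces the equilibrium measure $\mu_\varphi = \cL(\nu_{\varphi^+})$, with $\nu_{\varphi^+}$ the $F$-invariant Gibbs measure on $W$. Conditions \textbf{(P4)--(P5)} combined with \eqref{Gibbs1} give $Q_{\nu_{\varphi^+}} = \int\tau\,d\nu_{\varphi^+}<\infty$. The change-of-variables formula built into the definition of $\cL$ then reads
$$\int \psi\, d\mu_\varphi \;=\; \frac{1}{Q_{\nu_{\varphi^+}}} \int_W \opsi\, d\nu_{\varphi^+},$$
so the lemma reduces to verifying $\int_W |\opsi|\, d\nu_{\varphi^+} < \infty$.

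Next, I would split the integral over the basic sets $\{J_a\}_{a\in S}$. Condition \textbf{(P2)} for $\psi$ gives the oscillation bound \eqref{psi osc}, which yields $|\opsi(x)| \le |\log \xi_a| + H$ for every $x \in J_a$, where $\xi_a$ is as in \eqref{def rho xi}. The Gibbs inequality \eqref{Gibbs2} provides $\nu_{\varphi^+}(J_a) \le K \rho_a e^{-p\tau(a)}$. Combining,
$$\int_W |\opsi|\, d\nu_{\varphi^+} \;\le\; K\sum_{a\in S}\bigl(|\log \xi_a|+H\bigr)\rho_a e^{-p\tau(a)}.$$
The $H$-part is finite by \textbf{(P3)} (equivalently by Lemma \ref{lem: kappa}(3), which gives $\kappa(0)<p$), so everything reduces to bounding $\sum_a |\log\xi_a|\,\rho_a e^{-p\tau(a)}$.

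The main obstacle is that $\opsi$ itself may be unbounded on $W$: the regularity of $\psi$ only controls oscillation \emph{within} each $J_a$, not the size of $\opsi$ across different $J_a$'s. To absorb this, I would use the elementary inequality $|\log z|\le \eps^{-1}(z^\eps + z^{-\eps})$ valid for all $z>0$ and $\eps>0$, which replaces $|\log\xi_a|$ by $\xi_a^\eps + \xi_a^{-\eps}$ at the cost of a factor $\eps^{-1}$. By Lemma \ref{lem: kappa}(2)(3), $\kappa$ is either identically $-\infty$ on $\IR$ or real-valued and continuous on $(\tb_0,\bt_0)$, with $\kappa(0)<p$ in either case; so I can pick $\eps>0$ with $[-\eps,\eps]\subset(\tb_0,\bt_0)$ and $\max\{\kappa(\eps),\kappa(-\eps)\}<p$. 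The two-sided bound \eqref{relation uv} relating $v_n$ to $u_n$ then yields $\sum_n e^{-pn}v_n(\pm\eps)<\infty$, which closes the estimate. The crucial feature of hypothesis \textbf{(H)} exploited here is that complexity control is available on \emph{both} sides of $0$; that symmetry is exactly what lets one sandwich the potentially large $|\log\xi_a|$ between $\xi_a^\eps$ and $\xi_a^{-\eps}$.
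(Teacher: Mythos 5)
Your proposal is correct and follows essentially the same route as the paper: reduce via Kac's formula to the $\nu_{\varphi^+}$-integrability of $\opsi$, control $\opsi$ on each $J_a$ by the oscillation bound \eqref{psi osc} and the Gibbs estimate \eqref{Gibbs2}, and dominate the logarithm using the finite weighted complexity on both sides of $0$. The only cosmetic difference is that you use the symmetric inequality $|\log z|\le \eps^{-1}(z^{\eps}+z^{-\eps})$ at parameters $\pm\eps$, while the paper uses the one-sided bounds $\log z\le z^{\bt_1}/\bt_1$ and $z^{\tb_1}/\tb_1\le\log z$ at possibly asymmetric $\tb_1<0<\bt_1$ to bound the sum above and below separately.
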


\begin{proof}
Since $\mu_\varphi=\cL(\nu_{\varphi^+})\in \CM_L(f, Y)$, we have 
$1\le Q_{\nu_{\varphi^+}}<\infty$, where $Q_{\nu_{\varphi^+}}$ is given by \eqref{def Qnu}.
By Kac's formula (see e.g. Theorem 2.3 in \cite{PS08}), if $\int \opsi d\nu_{\varphi^+}$ is finite, then 
\beqn
-\infty < \int \opsi d\nu_{\varphi^+} = Q_{\nu_{\varphi^+}} \int \psi d\mu_\varphi <\infty.
\eeqn
It then suffices to show that $\ds -\infty < \int \opsi d\nu_{\varphi^+}<\infty$.
By \eqref{psi osc}, 
there is $H>0$ such that  
\beqn
\sum\limits_{a\in S} \sup_{x\in J_a} \opsi(x)\cdot  \nu_{\varphi^+}(J_a) - H \le 
\int \opsi d\nu_{\varphi^+}  
\le \sum\limits_{a\in S} \sup_{x\in J_a} \opsi(x) \cdot\nu_{\varphi^+}(J_a).
\eeqn
Recall the definitions of $\rho_a$ and $\xi_a$ in \eqref{def rho xi},
and also by \eqref{Gibbs2}, 
we have 
\beqn
K^{-1}\le 
\dfrac{\sum\limits_{a\in S} \sup_{x\in J_a} \opsi(x) \cdot \nu_{\varphi^+}(J_a)}{\sum\limits_{a\in S} \rho_a e^{-p\tau(a)} \log \xi_a}
\le K. 
\eeqn
Thus it boils down to show that 
\beqn
-\infty<
\sum\limits_{a\in S} \rho_a e^{-p\tau(a)} \log \xi_a = \sum_{n=1}^\infty e^{-pn} \sum_{a\in S_n} \rho_a\log \xi_a
<\infty.
\eeqn
Recall that $u_n(t)$ and $v_n(t)$ are defined by \eqref{def un} and \eqref{def vn} respectively,
and 
$\kappa(t)$ is  the auxiliary function  defined in \eqref{define kappa}.
By Condition \textbf{(H)} and Lemma~\ref{lem: kappa},  
for any $p_1\in (\kappa(0), p)$, there exist
$\tb_1\in (\tb_0, 0)$ and $\bt_1\in (0, \bt_0)$ 
such that $\kappa(\tb_1)<p_1$ and $\kappa(\bt_1)<p_1$. 
Hence there exists $D_1>0$ such that
$\ds u_n(\tb_1)\le D_1 e^{p_1n}$
and $\ds u_n(\bt_1)\le D_1 e^{p_1n}$. 
By the inequality 
$\ds  \log \xi_a \le \frac{\xi_a^{\bt_1}}{\bt_1}$ and \eqref{relation uv}, we get
\beqn
\sum_{n=1} e^{-pn} \sum_{a\in S_n} \rho_a\log \xi_a 
\le \frac{1}{\bt_1} \sum_{n=1}^\infty e^{-pn}  v_n(\bt_1) 
\le \frac{D_1 e^{H|\bt_1|}}{\bt_1} \sum_{n=1}^\infty e^{-(p-p_1)n}<\infty.
\eeqn
Similarly, using the inequality
$\ds \frac{\xi_a^{\tb_1}}{\tb_1}\le \log \xi_a$ and that $\tb<0$,
we get
 \beqn
\sum_{n=1} e^{-pn} \sum_{a\in S_n} \rho_a\log \xi_a 
\ge \frac{1}{\tb_1} \sum_{n=1}^\infty e^{-pn}  v_n(\tb_1) 
\ge \frac{D_1 e^{-H|\tb_1|}}{\tb_1} \sum_{n=1}^\infty e^{-(p-p_1)n}>-\infty.
\eeqn
Hence we have shown that $\ds -\infty<\int \opsi d\nu_{\varphi^+}<\infty$. 
The proof  is complete.
\end{proof}

We further set
\beqn
p_t:=P_L(\varphi + t\psi),
\eeqn
where $P_L(\cdot)$ is the variational liftable pressure given by \eqref{def lift pressure}, which is greater than
$-\infty$ but may be $+\infty$.
Since $\varphi$ is strongly regular, we have 
$p=p_0=P_L(\varphi)$ is a finite number.
Also, let $\lambda$ be the number given by \eqref{def lambda}. 
We further define a constant (with regard to $x$) by
\beq\label{def qt}
q_t = p+\lambda t,
\eeq
which can be viewed as a linear function in terms of $t$.

\begin{lemma}\label{lem: PSZ pt}
For any $t\in \IR$, we have
$\ds 
p_t\ge q_t.
$
\end{lemma}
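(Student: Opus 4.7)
The plan is to prove the inequality by testing the variational supremum defining $p_t = P_L(\varphi + t\psi)$ against a single well-chosen measure, namely the equilibrium measure $\mu_\varphi$ itself.

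First, I would recall that by Theorem~\ref{thm: is0}, since $\varphi$ is strongly nice (hence in particular nice), there exists a unique equilibrium measure $\mu_\varphi \in \CM_L(f, Y)$ for $\varphi$, and it attains
\[
P_{\mu_\varphi}(\varphi) = h_{\mu_\varphi}(f) + \int \varphi\, d\mu_\varphi = P_L(\varphi) = p.
\]
In particular, since $h(f) < \infty$ by the standing assumption \eqref{def entropy}, $h_{\mu_\varphi}(f)$ is finite, which forces $\int \varphi\, d\mu_\varphi$ to be finite as well. The preceding lemma already established that $\lambda = \int \psi\, d\mu_\varphi$ is a finite real number.

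Next, I would exploit the linearity of the free-energy functional in the potential. For any $t \in \IR$,
\[
P_{\mu_\varphi}(\varphi + t\psi) = h_{\mu_\varphi}(f) + \int \varphi\, d\mu_\varphi + t\int \psi\, d\mu_\varphi = p + t\lambda = q_t.
\]
Each term on the right is finite by the preceding paragraph, so this identity is unambiguous.

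Finally, because $\mu_\varphi \in \CM_L(f, Y)$, it lies in the class over which the supremum defining $P_L(\varphi + t\psi)$ is taken. Hence
\[
p_t = P_L(\varphi + t\psi) = \sup_{\mu \in \CM_L(f, Y)} P_\mu(\varphi + t\psi) \ge P_{\mu_\varphi}(\varphi + t\psi) = q_t,
\]
which is the desired inequality. There is no real obstacle here: the whole point of the definition of $q_t$ in \eqref{def qt} is that it equals the free energy of the test measure $\mu_\varphi$; the only things one needs to verify are integrability of $\psi$ with respect to $\mu_\varphi$ (done in the previous lemma) and that $\mu_\varphi$ is admissible in the variational supremum (automatic since $\mu_\varphi$ is liftable by construction).
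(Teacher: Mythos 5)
Your argument is correct and is essentially the paper's own proof: both test the supremum defining $p_t$ against the single liftable measure $\mu_\varphi$ and use $P_{\mu_\varphi}(\varphi)=p$ together with $\int \psi\, d\mu_\varphi=\lambda$ from the preceding lemma. The extra finiteness remarks you include are harmless and consistent with the paper's standing assumptions.
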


\begin{proof} 
Since   
$\mu_\varphi$ is the unique equilibrium measure for  $\varphi$ in the class 
$\CM_L(f, Y)$ of liftable measures,  we have 
\beqn
p=P_L(\varphi) = 
h_{\mu_\varphi}(f)  + \int \varphi d\mu_\varphi. 
\eeqn
Hence 
\beqn
p_t=\sup_{\mu\in \CM_L(f, Y)} \left\{h_\mu(f) + \int (\varphi + t\psi) d\mu \right\}
\ge h_{\mu_\varphi}(f) + \int (\varphi + t\psi) d\mu_\varphi 
= q_t.
\eeqn
\end{proof}

Recall that the auxiliary function $\kappa: (-\infty, \infty)\to [-\infty, \infty]$ that we have  defined in \eqref{define kappa}. 
 
\begin{lemma}\label{lem: kappa1}
There are $\tb<0<\bt$ such that 
$\ds \kappa(t)<q_t$ 
for all $t\in (\tb, \bt)$.
\end{lemma}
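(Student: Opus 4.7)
The plan is to combine the convexity/continuity properties of $\kappa$ established in Lemma~\ref{lem: kappa} with the fact that $q_t = p + \lambda t$ is affine, and then argue by continuity at the point $t = 0$, where we already know the strict inequality holds.

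The key observation is that $q_0 = p$, so Lemma~\ref{lem: kappa}(3) gives $\kappa(0) < p = q_0$; hence the desired inequality already holds strictly at $t = 0$, and it only remains to propagate it to a small neighborhood. I would split into two cases according to Lemma~\ref{lem: kappa}(2)(b). If $\kappa \equiv -\infty$ on $(-\infty, \infty)$, then trivially $\kappa(t) < q_t$ for every $t \in \IR$, since $q_t$ is finite, so any choice $\tb < 0 < \bt$ works. Otherwise $\kappa$ is a real-valued continuous function on $(\tb_0, \bt_0)$, so the function $g(t) := q_t - \kappa(t)$ is continuous on $(\tb_0, \bt_0)$ with $g(0) = p - \kappa(0) > 0$, and by continuity there exist $\tb_0 < \tb < 0 < \bt < \bt_0$ such that $g(t) > 0$ on $(\tb, \bt)$.

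To match the explicit formula \eqref{choose tb bt} promised in Theorem~A, I would not merely invoke continuity but instead use convexity of $\kappa$ to write down explicit chords: for $t \in [0, \bt_0]$,
\[
\kappa(t) \le \left(1 - \tfrac{t}{\bt_0}\right) \kappa(0) + \tfrac{t}{\bt_0}\, \kappa(\bt_0),
\]
and similarly for $t \in [\tb_0, 0]$. Setting this bound strictly less than $q_t = p + \lambda t$ yields explicit thresholds $\bt, \tb$ in terms of $p, \lambda, \kappa(0), \kappa(\tb_0), \kappa(\bt_0)$.

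The main obstacle, in principle, would be controlling $\kappa$ at the boundary of its domain, but that difficulty has already been absorbed into the hypothesis \textbf{(H)} and into Lemma~\ref{lem: kappa}. Once Lemma~\ref{lem: kappa} is in hand, the present lemma reduces to a direct continuity/convexity argument, and the only work is to make the threshold explicit.
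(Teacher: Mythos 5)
Your proposal is correct and follows essentially the same route as the paper: the same case split on $\kappa\equiv-\infty$, the strict inequality $\kappa(0)<p=q_0$ from Lemma~\ref{lem: kappa}(3), and convexity/continuity of $\kappa(t)-q_t$ on $(\tb_0,\bt_0)$ to propagate it to an interval around $0$. The only minor discrepancy is cosmetic: the paper's explicit choice \eqref{choose tb bt} takes $\tb,\bt$ as the endpoints of the negativity set of the convex function $\kappa_1(t)=\kappa(t)-q_t$ (convexity guaranteeing this set is an interval), whereas your chord bounds yield explicit but generally smaller thresholds, which still suffice for the lemma as stated.
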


\begin{proof} 

If $\kappa\equiv -\infty$, then we simply take $\tb=\tb_0$ and $\bt=\bt_0$.

Otherwise, by Lemma~\ref{lem: kappa},  
$\kappa$ is a real-valued convex continuous function on the interval $(\tb_0, \bt_0)$.
Take 
$\ds 
\kappa_1(t)=\kappa(t)-q_t.
$
Since $q_t$ is linear in $t$,
the function $\kappa_1$ is a 
continuous convex function on $(\tb_0, \bt_0)$, and
$\kappa_1(0)=\kappa(0)-p<0$.
Then this lemma holds if we set 
\beq\label{choose tb bt}
\tb:=\inf\{t> \tb_0: \ \kappa_1(t)<0\} \ \ \text{and} \ \ \bt:=\sup\{t< \bt_0: \ \kappa_1(t)<0\}.
\eeq
It is clear that $\tb<0<\bt$ by the continuity of $\kappa_1$.
\end{proof}

Now we are ready to prove Proposition \ref{lem: check P3-5}.

\begin{proof}[Proof of Proposition \ref{lem: check P3-5}]
Let $\tb<0<\bt$ be given by Lemma \ref{lem: kappa1}. 
Recall that $q_t$ is given by \eqref{def qt}. 
For any $t\in (\tb, \bt)$, by Lemma~\ref{lem: kappa1},  we can
choose $\eps_t$ such that { $0<2\eps_t<q_t-\kappa(t)$}.
By  Lemma~\ref{lem: kappa}, there is $D_t>0$ such that
\beq\label{est un}
u_n(t)\le D_t e^{n(\kappa(t)+\eps_t)}\le  D_t e^{n(q_t-\eps_t)}.
\eeq
We now verify Conditions \textbf{(P3)-(P5)} for the potential $\varphi+t\psi-q_t$.

\noindent (i) Note that
$\overline{\varphi + t\psi - q_t} = \overline{\varphi +t\psi} - q_t\tau$, 
by \eqref{est un}, we get
\beqn
\begin{split}
\sum_{a\in S} \sup_{x\in J_a} \exp (\overline{\varphi + t\psi- q_t}(x))
= \sum_{a\in S} \rho_a(t) e^{-q_t\tau(a)} 
& =\sum_{n=1}^\infty e^{-nq_t} u_n(t) \\
& \le D_t \sum_{n=1}^\infty e^{-n \eps_t}<\infty.
\end{split}
\eeqn
 Thus Condition \textbf{(P3)} holds.

\noindent
(ii) Since now Conditions \textbf{(P1)-(P3)} hold for 
the potential $\varphi+t\psi-q_t$, we have 
\beqn
p_t=P_L(\varphi+t\psi)=P_L(\varphi+t\psi-q_t) + q_t<\infty. 
\eeqn
Also note that 
\beqn
\begin{split}
(\varphi+t\psi-q_t)^+
& =\overline{\varphi+t\psi-q_t}-P_L(\varphi+t\psi-q_t)\tau \\
& =\overline{\varphi+t\psi} -q_t \tau-P_L(\varphi+t\psi)\tau +q_t \tau \\
&= \overline{\varphi+t\psi} - p_t \tau.
\end{split}
\eeqn
We take $\eps=\eps_t/2$, then by  \eqref{est un} and
Lemma~\ref{lem: PSZ pt}, 
Condition \textbf{(P4)} holds  since
\beqn
\begin{split}
\sum_{a\in S} \tau(a) \sup_{x\in J_a}\exp((\varphi+t\psi-q_t)^+(x)+\eps\tau(x)) 
= &  \sum_{a\in S} \tau(a) \rho_a(t) e^{\tau(a) (\eps-p_t) } \\
=&   \sum_{n=1}^\infty n e^{n(\eps-p_t)} u_n(t) \\
\le &   D_t \sum_{n=1}^\infty n e^{-n\eps_t/2}<\infty.
\end{split}
\eeqn

\noindent
(iii) Since now the potential $\varphi+t\psi-q_t$ satisfies Conditions \textbf{(P1)-(P4)}, 
by Theorem 4.5 in \cite{PS08},  
the normalized induced potential
$\ds (\varphi+t\psi-q_t)^+=\overline{\varphi+t\psi}-p_t\tau$
has a unique 
$F$-invariant equilibrium measure $\nu_t:=\nu_{(\varphi+t\psi-q_t)^+}$ in $\CM(F, W)$.
Moreover, $\nu_t$ has the Gibbs property, and in particular, 
there is $K_t>0$ such that for any $a\in J_a$, 
\beqn
\nu_t(J_a)\le K_t \sup_{x\in J_a} \exp (\varphi+t\psi-q_t)^+(x) \le 
K_t   \rho_a(t) e^{-p_t \tau(a)}
\eeqn
Hence by \eqref{est un} and
Lemma~\ref{lem: PSZ pt}, 
Condition \textbf{(P5)} holds since 
\beqn
\begin{split}
\nu_t(\{x\in W: \ \tau(x)\ge N\})
=\sum_{n\ge N} \sum_{a\in S_n} \nu_t(J_a)
&\le  K_t   \sum_{n\ge N} e^{-np_t} u_n(t) \\
&\le  K_tD_t   \sum_{n\ge N} e^{-n \eps_t} \\
& =C_t (e^{- \eps_t})^{N},
\end{split}
\eeqn
for some constant $C_t>0$. This completes the proof of Lemma \ref{lem: check P3-5}.
\end{proof}

Now we are ready to prove Theorem~A.

\begin{proof}[Proof of Theorem~A]
Proposition \ref{lem: check P1-2} and Proposition \ref{lem: check P3-5} 
show that Conditions \textbf{(P1)-(P5)} hold 
for the potential $\varphi+t\psi-q_t$ for all $t\in (\tb, \bt)$. 
In other words, $\varphi+t\psi-q_t$ is a strongly nice potential. 

It follows from Theorem \ref{thm: is0} that $\varphi+t\psi-q_t$ has a unique 
equilibrium measure $\mu_t:=\mu_{\varphi+t\psi-q_t}$ in the class $\CM_L(f, Y)$;
moreover, if $\{\cS, \tau\}$ satisfies the aperiodic condition~\eqref{def aperiodic}, 
then $\mu_{t}$ is mixing and in fact Bernoulli, combining with Condition \textbf{(P5)},
$\mu_{t}$ has exponential decay of correlations and satisfies the 
Central Limit Theorem. 
Therefore, Statement (1)(2) of Theorem~A  follows from the fact that 
$\varphi+t\psi$ and $\varphi+t\psi-q_t$ admit the same equilibrium measures.

We now show Statement (3) of Theorem~A. 
Since the potential $\varphi+t\psi-q_t$ satisfies Conditions \textbf{(P1)-(P3)}  for all $t\in (\tb, \bt)$,
then by Theorem 4.5 in \cite{PS08}, we have $P_L(\varphi+t\psi - q_t)<\infty$. Moreover, since 
\beqn
\varphi+(t+s)\psi - q_{t+s} = \left[(\varphi-p)+t(\psi-\lambda) \right] + s (\psi-\lambda) 
\eeqn
is nice for all $t\in (\tb, \bt)$ and sufficiently small $|s|$,
it follows from Theorem \ref{thm: SZ} that the function 
$
t\mapsto P_L (\varphi+t\psi - q_{t})
$
is real analytic in $(\tb, \bt)$. Therefore,  
the function
\beqn
t\mapsto P_L(\varphi+t\psi )=P_L(\varphi+t\psi - q_t) + p+\lambda t,
\eeqn
is also finite and real analytic in $(\tb, \bt)$. 
\end{proof}

\section{Liftability Problem}

\subsection{Compatible Partition and Upper bound for $P_\mu(\varphi)$}

In this subsection, we first explain Condition \textbf{(L1)} in Theorem~B.

\begin{definition}\label{def compatible partition}
We say that a measurable partition $\cP$ of $X$ is \emph{compatible} to the inducing scheme $\{\cS, \tau\}$ if 
the following property holds: 
for any $a\in S$ and any $0\le i<\tau(a)$, 
the set $f^i(J_a)$ is contained in an element of $\cP$.
\end{definition}

The compatibility condition implies that the iterates of any block $J_a$ would not be cut
by $\partial \cP$ into two or more pieces before it returns to the base $W$. 
Such condition is similar to Condition (C) in \cite{PSZ08}.

Let $\cP$ be the partition given by 
Condition \textbf{(L1)}, that is,  $\cP$ is a  generating partition 
which is compatible to the inducing scheme $\{\cS, \tau\}$. 
Note that $\cP$ is \emph{generating} means  
the smallest $\sigma$-algebra containing $\bigcup_{n\ge 0}\cP_n$ 
(or $\bigcup_{n\ge 0}  \wcP_n$ if $f$ is invertible)
is the Borel $\sigma$-algebra on $X$, where 
$\cP_n := \bigvee_{k=0}^n f^{-k} \cP$ (or $\wcP_n := \bigvee_{k=-n}^n f^{-k} \cP$).

Let $\varphi: X\to \IR$ be the potential given by Condition \textbf{(L2)}, that is,
the inducing scheme $\{\cS, \tau\}$ has finite $\ovarphi$-complexity,
and thus the complexity function $\cK(\ovarphi)$ is finite (see Definition~\ref{def finite psi comp}).

Theorem~B claims that under Conditions \textbf{(L1)(L2)},
all ergodic measures $\mu\in \CM(f, X)$ are liftable, i.e., $\mu\in \CM_L(f, Y)$, provided that
\begin{itemize}
\item $\mu$ gives positive weight to the base, i.e., $\mu(W)>0$;
\item $\mu$ has sufficiently large but finite pressure, i.e.,
$\cK(\ovarphi)<P_\mu(\varphi)<\infty$.
\end{itemize} 
Recall that $P_\mu(\varphi)$ is the pressure of $\varphi$ with respect to $\mu$ (see \eqref{def Pmu}).
As we assume that $f$ has finite variational entropy (see \eqref{def entropy}),
the condition that $-\infty<P_\mu(\varphi)<\infty$ is equivalent to that $\ds  \int |\varphi| d\mu<\infty$.
Hence
it suffices to consider ergodic measures  $\mu\in \CM(f, X)$ such that $\varphi$ is $\mu$-integrable.

To effectively estimate the pressure $P_\mu(\varphi)$ from above, 
we introduce the following Caratheodory-Pesin type quantity.   
Given an integer $m\ge 1$, 
let $\cG_m$ be the collection of all elements in $\cup_{n\ge m} \cP_n$
whose depth are marked, that is,
\beqn
\cG_m:=\left\{(A, n): \ n\ge m \ \text{and} \ A\in\cP_n \right\}.
\eeqn
Given a Borel subset $Z\subset X$ and
a real number $\alpha\in \IR$, we let 
\beq\label{def M}
M(Z, \varphi, \alpha, m):=
\inf_{\cG} \left\{ \sum_{(A, n)\in \cG} 
\exp\left(-\alpha n + \sup_{x\in A\cap Z} S_n\varphi(x)  \right)\right\},
\eeq
where 
$S_n\varphi:=\sum_{k=0}^{n-1} \varphi\circ f^k$ and 
the infimum is taken over all sub-collection $\cG$ of $\cG_m$ which covers $Z$, that is,
$\ds Z\subset \bigcup_{(A, n)\in \cG} A$. 
Since $M(Z, \varphi, \alpha, m)$ is non-decreasing in terms of $m$, 
we then define
\beqn
M(Z, \varphi, \alpha):=\lim_{m\to \infty} M(Z, \varphi, \alpha, m).
\eeqn 
Moreover, we define the \emph{pressure of $\varphi$ on $Z$} by
\beqn
P_Z(\varphi):=\inf\left\{\alpha\in \IR: \ M(Z, \varphi, \alpha)=0\right\}
=\sup\left\{\alpha\in \IR: \  M(Z, \varphi, \alpha)>0\right\}.
\eeqn

We remark that
our definition of $P_Z(\varphi)$ is slightly different from the standard definition given in Section 11 of \cite{Pe97},
as the collection $\cG$ is not taken as an open cover but a sub-collection related to the generating partition $\cP$.  
Moreover, 
since the map $f$ and the potential  $\varphi$
that we consider are not assumed to be continuous, 
we may not have the \emph{inverse variational principle}, i.e., 
$\ds P_\mu(\varphi)=\inf\{P_Z(\varphi): \mu(Z)>0\}$.
Nevertheless, 
the following lemma is enough for our purpose, 
which provides an upper bound for $P_\mu(\varphi)$. 
 
\medskip 
 
\begin{lemma}\label{lem: est Q}
For any ergodic measure $\mu\in \CM(f, X)$ with $\ds \int |\varphi| d\mu<\infty$, 
and for any Borel subset $Z\subset X$ with $\mu(Z)>0$, 
we have
\beqn
P_\mu(\varphi) \le P_Z(\varphi). 
\eeqn
\end{lemma}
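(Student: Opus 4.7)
The plan is to prove the contrapositive form of the claim: for every real $\alpha < P_\mu(\varphi)$, I show that $M(Z, \varphi, \alpha) > 0$, which by the very definition of $P_Z(\varphi)$ yields $\alpha \le P_Z(\varphi)$; letting $\alpha \uparrow P_\mu(\varphi)$ then gives the lemma. Note $P_\mu(\varphi) \in \IR$, since $h_\mu(f) \le h(f) < \infty$ and $\varphi$ is $\mu$-integrable, so there is always room to choose such $\alpha$.

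The core input is a standard ``entropy plus Birkhoff plus Egorov'' extraction. Fix $\alpha$ and choose $\eps > 0$ with $P_\mu(\varphi) - \alpha - 2\eps > 0$. Since $\cP$ is finite and generating by Condition \textbf{(L1)}, the Kolmogorov--Sinai theorem gives $h_\mu(f, \cP) = h_\mu(f)$, and Shannon--McMillan--Breiman provides the $\mu$-a.e.\ convergence $-\tfrac{1}{n}\log\mu(\cP_n(x)) \to h_\mu(f)$. Because $\varphi$ is $\mu$-integrable, Birkhoff's theorem also yields $\tfrac{1}{n} S_n\varphi(x) \to \int \varphi\, d\mu$ $\mu$-a.e. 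Egorov's theorem then produces a measurable subset $Z_0 \subset Z$ with $\mu(Z_0) > 0$ and an integer $N$ on which both convergences hold uniformly, so that for every $x \in Z_0$ and $n \ge N$,
\[
\mu(\cP_n(x)) \le e^{-n(h_\mu(f) - \eps)} \quad \text{and} \quad S_n\varphi(x) \ge n\bigl(\textstyle \int \varphi\, d\mu - \eps\bigr).
\]

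Given this, I fix any $m \ge N$ and any sub-collection $\cG \subset \cG_m$ covering $Z$, and pass to the subfamily $\cG_0 := \{(A, n) \in \cG : A \cap Z_0 \ne \emptyset\}$, which still covers $Z_0$. For each $(A, n) \in \cG_0$ and any $x_A \in A \cap Z_0$, the element $A \in \cP_n$ coincides with $\cP_n(x_A)$, so combining the two uniform bounds gives
\[
e^{-\alpha n + \sup_{y\in A\cap Z} S_n\varphi(y)} \ge e^{-\alpha n + S_n\varphi(x_A)} \ge \mu(A)\, e^{n(P_\mu(\varphi) - \alpha - 2\eps)}.
\]
Summing over $\cG_0$, using $\sum_{(A,n)\in \cG_0} \mu(A) \ge \mu(Z_0)$ together with $n \ge m$, yields
\[
\sum_{(A,n)\in \cG} e^{-\alpha n + \sup_{y\in A\cap Z} S_n\varphi(y)} \ge e^{m(P_\mu(\varphi) - \alpha - 2\eps)}\,\mu(Z_0).
\]
Taking the infimum over covers $\cG$ gives $M(Z, \varphi, \alpha, m) \ge e^{m(P_\mu(\varphi) - \alpha - 2\eps)}\mu(Z_0)$, and letting $m \to \infty$ forces $M(Z, \varphi, \alpha) = \infty > 0$.

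There is no serious obstacle here. The one point deserving care is that the supremum defining $M$ is taken over $A \cap Z$, yet I only ever need a lower bound by the \emph{single} value $S_n\varphi(x_A)$ at a point $x_A \in A \cap Z_0 \subset A \cap Z$; in particular no finer control of the oscillation of $\varphi$ over $A$, nor any information about the discontinuities of $f$, is needed. A trivial indexing mismatch between $\cP_n = \bigvee_{k=0}^{n} f^{-k}\cP$ and the $n$-term sum $S_n\varphi$ can be absorbed harmlessly into $\eps$.
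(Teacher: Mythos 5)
Your proposal is correct and follows essentially the same route as the paper: Shannon--McMillan--Breiman plus Birkhoff give uniform estimates on a positive-measure subset $Z_0\subset Z$, and then any cover in $\cG_m$ is bounded below term-by-term by $\mu(A)$, yielding $M(Z,\varphi,\alpha,m)\ge \const\cdot\mu(Z_0)>0$. The only cosmetic difference is that you fix an arbitrary $\alpha<P_\mu(\varphi)$ and let the extra factor $e^{m(P_\mu(\varphi)-\alpha-2\eps)}$ blow up, while the paper takes $\alpha_\eps=P_\mu(\varphi)-2\eps$ and only needs positivity; both give the same conclusion.
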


\begin{proof}
For any $\eps>0$, we set $\ds \alpha_\eps=P_\mu(\varphi)-2\eps$. 
Recall that $\cP$ is a generating partition, and hence $h_\mu(f)=h_\mu(f, \cP)$.

Since $\mu$ is ergodic, by the Birkhoff's ergodic theorem and the
Shannon-McMillan-Breiman theorem,
there exists $m_0\in \IN$ and 
a Borel subset $X_0\subset X$ with $\mu(X\backslash X_0)<\frac12\mu(Z)$,
such that for all $n\ge m_0$ and $x\in X_0$, 
\beqn
S_n\varphi(x)\ge n\left(\int \varphi d\mu -\eps \right)
\ \ \text{and}  \ \ 
\log \mu\left(\cP_n(x) \right) \le -n\left( h_\mu(f) - \eps \right),
\eeqn
where $\cP_n(x)$ denotes the element of $\cP_n$ containing $x$.  

We take $Z_0=Z\cap X_0$, then $\mu(Z_0)>0$. Given any $m\ge m_0$,
let $\cG$ be a sub-collection of $\cG_m$ which covers $Z_0$.
For any $(A, n)\in \cG$ with $A\cap Z_0\ne \emptyset$, 
we have $A=\cP_n(x)$ for any $x\in A\cap Z_0$, and hence 
\beqn
\begin{split}
\sum_{(A, n)\in \cG} \exp\left( -\alpha_\eps n + \sup_{x\in A\cap Z} S_n\varphi(x)  \right)
&\ge \sum_{(A, n)\in \cG}\exp\left( -\alpha_\eps n + \sup_{x\in A\cap Z_0} S_n\varphi(x)  \right) \\
&\ge \sum_{(A, n)\in \cG}\exp\left( n\left( \int \varphi d\mu -\eps -\alpha_\eps\right) \right) \\
& = \sum_{(A, n)\in \cG}\exp \left(- n\left( h_\mu(f) -\eps \right) \right) \\
&\ge \sum_{(A, n)\in \cG}  \mu(A)\ge \mu(Z_0).
\end{split}
\eeqn
Therefore, 
\beqn
M(Z, \varphi, \alpha_\eps)=\lim_{m\to \infty} M(Z, \varphi, \alpha_\eps, m)\ge
\lim_{m\to \infty} M(Z_0, \varphi, \alpha_\eps, m)\ge \mu(Z_0)>0,
\eeqn
which implies that $P_Z(\varphi)\ge \alpha_\eps=P_\mu(\varphi)-2\eps$.
Thus $P_Z(\varphi)\ge  P_\mu(\varphi)$ since $\eps$ is arbitrarily chosen. 
\end{proof}

\subsection{Proof of Theorem~B}

Section 5 in \cite{PSZ16} provides a necessary condition for an ergodic measure 
to be non-liftable. We briefly describe it below.

Define a tower associated with the inducing scheme $\{\cS, \tau\}$ by
\beqn
\hY:=\left\{(x, k): x\in W, \ 0\le k<\tau(x) \right\}\subset W\times \IN
\eeqn
and define $\hf: \hY\to \hY$ by 
\beqn
\hf(x, k):=
\begin{cases}
(x, k+1),  & \ k<\tau(x)-1, \\
(F(x), 0),  & \ k=\tau(x)-1. 
\end{cases}
\eeqn
For any $N\ge 1$, we set 
\beqn
\hE_N:=\left\{(x, k)\in \hY:  \ 0\le k\le N \right\}.
\eeqn
For any $x\in W$ and any $n\in \IN$, we define the 
frequency of the $\hf$-orbit of the point $(x, 0)$ falling into $\hE_N$ 
during the first $n$ iterates by
\beqn 
A_n^N(x):=\frac1n\#\left\{0\le j<n:\ \hf^j((x, 0))\in \hE_N\right\}.
\eeqn
Following the ideas from Keller \cite{Ke89} and
Zweim\"uller \cite{Zwe05},
Pesin, Senti and Zhang obtained the following property for non-liftable measures (Lemma 5.4 in \cite{PSZ16}). 

\begin{lemma}\label{lem: freq} 
Let $\mu\in \CM(f, X)$ be such that $\mu(W)>0$.
If  $\mu$ is non-liftable,
then there exists an increasing sequence $\{n_k\}_{k\ge 1}$ of positive integers such that for any $\eps>0$, 
there exists $Z\subset W$ satisfying that $\mu(Z)\ge (1-\eps)\mu(W)$ and
\beqn
\lim_{k\to\infty} A_{n_k}^N(x)=0 \ \ \text{uniformly on}\ Z \ \text{for all}\ N\ge 1. 
\eeqn
\end{lemma}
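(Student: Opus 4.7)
The plan is to adapt the Keller--Zweim\"uller strategy from infinite ergodic theory: lift $\mu$ to a $\sigma$-finite $\hat f$-invariant measure $\hat \mu$ on the tower $\hat Y$, translate non-liftability of $\mu$ into infinitude of the total mass $\hat\mu(\hat Y)$, and then invoke an infinite-measure ergodic theorem to force the Birkhoff averages of $\mathbf 1_{\hat E_N}$ to vanish.

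Concretely, I would first build $\hat\mu$ from the data of $\mu$. Since $\mu$ is ergodic for $f$ and $\mu(W)>0$, Kac's lemma gives an $f_W$-invariant probability $\mu_W=\mu|_W/\mu(W)$ for the first return map, with $\int \varphi_W\, d\mu_W<\infty$. The induced map $F$ is a variable iterate of $f_W$, namely $F=f_W^{q(\cdot)}$ where $q(x)\ge 1$ records the number of first returns consumed by one $F$-step; from this and $\mu_W$ one extracts an $F$-invariant $\sigma$-finite measure $\nu$ on $W$. Spreading $\nu$ up the tower by $\hat\mu(B\cap (W\times\{k\})):=\nu(\{x:(x,k)\in B,\ \tau(x)>k\})$ yields an $\hat f$-invariant $\hat\mu$ satisfying $\hat\mu(\hat E_N)\le (N+1)\nu(W)<\infty$ and $\hat\mu(\hat Y)=Q_\nu$. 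The normalization $\hat\mu/Q_\nu$ projects under $\pi(x,k)=f^k(x)$ to $\mu$, so $\mu\in\CM_L(f,Y)$ precisely when $Q_\nu<\infty$. Under the hypothesis of the lemma we therefore have $\hat\mu(\hat Y)=\infty$, while $\hat\mu$ remains $\sigma$-finite, conservative (Poincar\'e recurrence in each finite-mass $\hat E_N$), and ergodic (inherited from ergodicity of $\mu$ via the semi-conjugacy $\pi$).

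Next I would apply the Hopf/Aaronson theorem for conservative ergodic systems preserving an infinite $\sigma$-finite measure: the Birkhoff averages of the indicator of any finite-mass set go to zero almost everywhere. Taking that set to be $\hat E_N$, this gives $A_n^N(x)\to 0$ for $\nu$-a.e.\ $x\in W$, for each fixed $N$. Finally, I would pass to uniform convergence on a large set by a standard diagonal-plus-Egorov argument: for each $N\ge 1$, Egorov yields $Z_N\subset W$ with $\mu(W\setminus Z_N)<\eps\,\mu(W)\cdot 2^{-N}$ on which $A_n^N\to 0$ is uniform along some subsequence $\{n_k^{(N)}\}$; extracting a diagonal subsequence $\{n_k\}$ refining all the $\{n_k^{(N)}\}$ and setting $Z:=\bigcap_N Z_N$ produces a single $\{n_k\}$ and a single $Z$ with $\mu(Z)\ge(1-\eps)\mu(W)$ on which $A_{n_k}^N\to 0$ uniformly for every $N$.

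I expect the main obstacle to be the first step: producing an $F$-invariant $\nu$ directly from $\mu$. In the general inducing setup $\tau$ is not required to equal the first-return time $\varphi_W$, so the naive candidate $\mu|_W/\mu(W)$ is typically not $F$-invariant; one must write $F=f_W^{q(\cdot)}$ and weight $\mu_W$ by the distribution of the $q$-values to obtain $\nu$, all the while checking that the resulting $\hat\mu$ is genuinely $\hat f$-invariant and that $\hat\mu(\hat Y)=Q_\nu$ encodes the correct liftability criterion. The secondary subtle point is arranging the diagonal extraction so that a single subsequence $\{n_k\}$, independent of $N$ and $\eps$, gives uniform convergence on a set of measure at least $(1-\eps)\mu(W)$ for every $N$ simultaneously, as required by the statement.
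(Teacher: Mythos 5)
Your overall plan — non-liftability should manifest as escape of mass up the tower, and then the visit frequencies to the finite levels $\hE_N$ must die — is the right heuristic, and it is indeed the Keller--Zweim\"uller circle of ideas that the paper invokes (the paper does not reprove the lemma; it quotes Lemma 5.4 of \cite{PSZ16}). But your implementation has a genuine gap at its very first step. You propose to extract from $\mu_W=\mu|_W/\mu(W)$ an $F$-invariant $\sigma$-finite measure $\nu$ on $W$ by writing $F=f_W^{q(\cdot)}$ and ``weighting $\mu_W$ by the distribution of the $q$-values.'' There is no such general device: $\tau$ is not the first return time, so $F$ is a variable-power jump transformation of $f_W$, and an $f_W$-invariant measure does not induce an $F$-invariant one (the Zweim\"uller correspondence in \cite{Zwe05} goes the other way, from an $F$-invariant measure to an $f$-invariant one). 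Producing a $\sigma$-finite $F$-invariant $\nu$ whose tower extension $\hmu$ is tied to $\mu$ is essentially the liftability problem itself; if such a conservative ergodic $\nu$ always existed, non-liftability would reduce to $Q_\nu=\infty$, which is exactly the structure one is not entitled to assume. Moreover, in the non-liftable case your own construction gives $\hmu(\hY)=\infty$, and then the sentence ``$\hmu/Q_\nu$ projects to $\mu$'' is vacuous: the projection of an infinite measure has infinite mass, so the link between $\hmu$ and $\mu$ is severed precisely in the case the lemma is about.

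This first gap propagates into a second, equally fatal one: even granting $\nu$ and $\hmu$, the Hopf/Aaronson ergodic theorem yields $A_n^N(x)\to 0$ for $\nu$-almost every $x$, whereas the lemma demands a set $Z$ with $\mu(Z)\ge(1-\eps)\mu(W)$. Since $\nu$ and $\mu|_W$ need not be equivalent (nor even mutually non-singular), your Egorov-plus-diagonal step, which you apply with respect to $\mu$, does not follow from a $\nu$-a.e.\ statement. The argument behind the paper's citation avoids both problems by never constructing an invariant measure upstairs from $\mu$: one lifts $\mu|_W$ to level $0$ of the tower as a (non-invariant) measure, forms the Ces\`aro averages of its push-forwards under $\hf$, and shows that if these averages retained a definite amount of mass in some $\hE_N$ along every subsequence, a limit would produce a finite $\hf$-invariant measure whose projection, by ergodicity, is $\mu$ — contradicting non-liftability. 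This gives convergence of $\int A_{n_k}^N\,d\mu$ (equivalently, convergence in $\mu$-measure) along a single subsequence valid for all $N$, and only then does an Egorov-type extraction produce the set $Z$ of large $\mu$-measure with uniform convergence. If you want a self-contained proof, you should rebuild it along those lines rather than through an infinite-measure invariant lift.
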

 
In fact, \cite{PSZ16} considered an frequency $A_n^N$ falling into a larger set containing $\hE_N$, and thus 
Lemma~\ref{lem: freq} here is a corollary of  Lemma 5.4 in \cite{PSZ16}.
 
Later in the proof of Theorem~B, 
we shall use the following combinatoric inequality (see e.g. Section I.5 of \cite{Shields96}):  
for any $n\ge m\ge 0$, we have 
\beq\label{combinatoric}
 {n \choose    m}\le \exp\left(nh\left(\frac{m}{n}\right)\right)
< \exp(n),
\eeq
where 
\beq\label{def h}
h(t):=-t\log t-(1-t)\log(1-t), \ \text{for any} \ t\in [0, 1].
\eeq
Note that  $h(t)$ is increasing for $t\in [0, \frac12]$
and $\ds \max_{t\in [0, 1]} h(t)=\log 2<1$.

We are now ready to prove Theorem~B.

\begin{proof}[Proof of Theorem~B]
Let $\mu\in \CM(f, X)$ be an ergodic   measure
with $\mu(W)>0$ and $\cK(\ovarphi)<P_\mu(\varphi)<\infty$. 
Fix some $\cK\in \left(\cK(\ovarphi),  P_\mu(\varphi)\right)$.
By Definition~\ref{def finite psi comp}, 
we can pick $D>1$ and $n_0\in \IN$ such that 
\beq\label{est Un}
\begin{split}
U_n(\ovarphi) \le De^{\cK n} \   \text{for all}\ n\ge 1,  \\
U_n(\ovarphi) \le e^{\cK n} \  \  \text{for all}\ n\ge n_0.
\end{split}
\eeq
We fix a sufficiently small $\delta\in (0, \frac{1}{10})$ such that
\beq\label{choose delta}
\cK_1:= \cK +  3\delta+h(2\delta) +2\delta\log D  <  P_\mu(\varphi),
\eeq
 where $h(\cdot)$ is the function given by \eqref{def h}.

Now suppose that $\mu$ is non-liftable. 
We shall prove Theorem~B by contradiction.

Fix
$\ds
N=\lceil 1/\delta\rceil.
$
By Lemma~\ref{lem: freq}, 
there exists
a Borel subset $Z\subset W$ with 
$\mu(Z)>0$ 
and an increasing sequence $\{n_k\}_{k\ge 1}$ of positive integers  
(by choosing a subsequence of the original sequence in Lemma~\ref{lem: freq} if necessary)
such that 
\beq\label{A delta}
\sup_{x\in Z} A_{n_k}^N(x)< \delta/2, \ \ \text{for all} \ k\ge 1.
\eeq
Without loss of generality, we may assume that for all $k\ge 1$, 
\beq\label{def nk}
\begin{split}
(i)\ \ & n_k>\max\{n_0,  100N/\delta\}, \ \text{where}\ n_0\ \text{is  given by \eqref{est Un}}; \\
(ii)\ & n^3\le e^{\delta n} \ \text{for all}\ n\ge n_k. 
\end{split}
\eeq

We would like
to estimate the quantity $M\left(Z, \varphi, \alpha, n_k\right)$ introduced in \eqref{def M}.
To this end,
we construct a particular covering of $Z$ as follows.
  
An \emph{$n_k$-code} via the inducing scheme $\{\cS, \tau\}$ is the $s$-tuple of the form
\beqn
\ba=(a_0, \dots, a_{s-1}) \in S^{s}
\eeqn
such that $m_{s-1}< n_k\le m_s$, 
where $m_0=0$ and 
$
m_j=\sum_{i=0}^{j-1} \tau(a_i) 
$
for $1\le j\le s$.
For convenience, we denote
$\ds n(\ba)=m_s$
and 
$\ds \btau(\ba)=(\tau(a_0), \dots, \tau(a_{s-1}))$.

Condition \textbf{(L1)} assumes that the partition $\cP$ is compatible to the inducing scheme,
which  implies that for any $a\in S$, the block $J_a$ is contained in an element of $\cP_{\tau(a)}$.
Hence an $n_k$-code $\ba$ defines a Borel subset
\beqn
A_\ba:=J_{a_0} \cap f^{-m_1} J_{a_1}\cap \dots \cap f^{-m_{s-1}}J_{a_{s-1}},
\eeqn
which is contained in an element of $\cP_{n(\ba)}$. 

Subject to the Borel subset $Z$ and the sequence $\{n_k\}_{k\ge 1}$ that are
obtained from Lemma~\ref{lem: freq},  we make the following notions:
\begin{itemize}
\item
let $\Gamma(Z, n_k)$ be the collection of all $n_k$-codes $\ba$ such that
$A_{\ba}\cap Z\ne \emptyset$. 
\item
we say that a string $\btau=(\tau_0, \dots, \tau_{s-1})$ is \emph{$(Z, n_k)$-admissible} 
if there is $\ba\in \Gamma(Z, n_k)$ such that $\btau(\ba)=\btau$. 
Let $\cA(Z, n_k)$ be the collection of $(Z, n_k)$-admissible strings.
\item
given a $(Z, n_k)$-admissible string $\btau$, we 
let $\Gamma(Z, \btau)$ be the collection of all $n_k$-codes $\ba\in \Gamma(Z, n_k)$ such that $\btau(\ba)=\btau$.
It is clear that 
$
\Gamma(Z, n_k)=\bigcup_{\btau\in \cA(Z, n_k)} \Gamma(Z, \btau).
$
\item given a $(Z, n_k)$-admissible string $\btau=(\tau_0, \dots, \tau_{s-1})$, 
we denote $n(\btau)=\sum_{i=0}^{s-1}\tau_i$.
We claim that 
\beq\label{tau quotient}
\dfrac{\sum_{\tau_i\le N}\ \tau_i}{n(\btau)}<\delta, 
\ \text{or equivalently}, \
\dfrac{\sum_{\tau_i> N}   \ \tau_i}{n(\btau)}\ge 1-\delta.
\eeq
Indeed, let $n':=\sum_{0\le i<s-1: \ \tau_i\le N} \tau_i$, then \eqref{A delta} implies that 
$\ds
n'/n_k<\delta/2,
$
and hence
\beqn
\dfrac{\sum_{\tau_i\le N}\ \tau_i}{n(\btau)}\le \dfrac{n'+N}{n(\btau)}\le \dfrac{n'+N}{n_k}
\le \delta/2+\delta/100<\delta.
\eeqn
\end{itemize}

It is clear that $Z\subset \bigcup_{\ba\in \Gamma(Z, n_k)} A_{\ba}
=\bigcup_{\btau\in \cA(Z, n_k)} \bigcup_{\ba\in \Gamma(Z, \btau)} A_{\ba}$,
where each $A_{\ba}$ lies inside an element of $\cP_{n(\ba)}$. 
Hence for any $\alpha\in \IR$,  we have
\beqn
\begin{split}
M(Z, \varphi, \alpha, n_k) 
&\le \sum_{\ba\in \Gamma(Z, n_k)} \exp\left(-\alpha n(\ba) + \sup_{x\in A_{\ba}\cap Z} S_{n(\ba)}\varphi(x)  \right) \\
& = \sum_{\btau\in \cA(Z, n_k)} e^{-\alpha n(\btau)} 
\sum_{\ba\in \Gamma(Z, \btau)} \exp\left(  \sup_{x\in A_{\ba}\cap Z} S_{n(\btau)}\varphi(x)  \right).
\end{split}
\eeqn
Note that for any  $\btau=(\tau_0, \dots, \tau_{s-1})\in \cA(Z, n_k)$
and any 
$\ba=(a_0, \dots, a_{s-1})\in \Gamma(Z, \btau)$,  
we set $m_0=0$ and  $m_j=\sum_{i=0}^{j-1} \tau_i$ for $1\le j\le s$, then
we have
\beqn
\sup_{x\in A_{\ba}\cap Z} S_{n(\btau)}\varphi(x)
= \sup_{x\in A_{\ba}\cap Z}
\sum_{i=0}^{s-1} \ovarphi( f^{m_i}(x))   \le \sum_{i=0}^{s-1} \sup_{x\in J_{a_i}} \ovarphi(x).
\eeqn
Recall that $U_n(\cdot)$ given by Definition~\ref{def finite psi comp}, then by \eqref{est Un} and 
\eqref{tau quotient}, we have
\beqn
\begin{split}
\sum_{\ba\in \Gamma(Z, \btau)} \exp\left(  \sup_{x\in A_{\ba}\cap Z} S_{n(\btau)}\varphi(x)  \right)
& \le \sum_{\ba\in \Gamma(Z, \btau)} \prod_{i=0}^{s-1} \exp\left( \sup_{x\in J_{a_i}} \ovarphi(x) \right) \\
& \le \prod_{i=0}^{s-1} U_{\tau_i}(\ovarphi)  \\
&\le \prod_{\tau_i\le N} De^{\cK\tau_i} \prod_{\tau_i> N} e^{\cK\tau_i} \\
&\le \exp\left( n(\btau) \left( \cK + \delta\log D\right) \right).
\end{split}
\eeqn
Therefore, 
\beqn
\begin{split}
M(Z, \varphi, \alpha, n_k) 
&\le \sum_{\btau\in \cA(Z, n_k)} 
\exp\left( n(\btau) \left( \cK-\alpha + \delta\log D\right) \right)\\
&=\sum_{n=n_k}^\infty \gamma_n \exp\left( n\left( \cK-\alpha + \delta\log D\right) \right),
\end{split}
\eeqn
where $\gamma_n:=\#\{\btau\in \cA(Z, n_k): \ n(\btau)=n\}$.
Note that $\gamma_n$ is no more than
the number of ways to rewrite $n$ as
\beqn
n=\tau_0+\tau_1+\dots+\tau_{s-1}
\eeqn 
such that each $\tau_i\ge 1$ and $\sum_{\tau_i\le N} \ \tau_i<\delta n$. 
Let $s_1$ be the number of $i$'s such that $\tau_i\le N$, 
and then $s_2=s-s_1$ is the number of $i$'s such that $\tau_i> N$.
Then we have 
$\ds
0\le s_1< \delta n
$
and 
$\ds
1\le s_2\le n/N<\delta n
$.
Therefore, by the combinatoric inequality \eqref{combinatoric} and 
the monotonicity of the function $h(t)$ given by \eqref{def h} for $t\in [0, \frac12]$, 
as well as the choices of $\delta$ and $n_k$ given by \eqref{choose delta} and \eqref{def nk} respectively,
we get for any $n\ge n_k$, 
\beqn
\begin{split}
\gamma_n
& \le \sum_{s_1=0}^{\lfloor \delta n\rfloor} \sum_{s_2=1}^{\lfloor \delta n\rfloor}
{n-s_2 N-1 \choose s_1+s_2-1} { s_1+s_2 \choose s_1} \\
&\le \sum_{s_1=0}^{\lfloor \delta n\rfloor} \sum_{s_2=1}^{\lfloor \delta n\rfloor}
{n  \choose s_1+s_2-1} { s_1+s_2 \choose s_1}\\
&\le \sum_{s_1=0}^{\lfloor \delta n\rfloor} \sum_{s_2=1}^{\lfloor \delta n\rfloor}
\exp\left( n h\left(\frac{s_1+s_2-1}{n}\right)\right) 
\exp\left( s_1+s_2  \right) \\
&\le \sum_{s_1=0}^{\lfloor \delta n\rfloor} \sum_{s_2=1}^{\lfloor \delta n\rfloor}
\exp\left( n h\left(2\delta\right)\right) 
\exp\left( 2\delta n  \right) \\
&\le 2(\delta n)^2 \cdot  \exp\left(n \left(2\delta + h(2\delta)\right) \right) 
< \exp\left(n(3\delta + h(2\delta))\right).
\end{split}
\eeqn
Finally, let $\cK_1$ given by \eqref{choose delta} and take some $\alpha\in (\cK_1, P_\mu(\varphi))$, then
we obtain
\beqn
\begin{split}
M(Z, \varphi, \alpha, n_k) 
& \le  \sum_{n=n_k}^\infty  \exp\left( n\left( \cK-\alpha + 3\delta+ h(2\delta) +\delta\log D \right) \right) \\
&\le \sum_{n=n_k}^\infty \exp\left(n\left(\cK_1 - \alpha\right)\right)
\le \dfrac{\exp\left(n_k(\cK_1-\alpha) \right)}{1-\exp(\cK_1-\alpha)},
\end{split}
\eeqn
which implies that $M(Z, \varphi, \alpha)=\lim_{k\to\infty} M(Z, \varphi, \alpha, n_k) =0$
and thus $P_Z(\varphi)\le \alpha<P_\mu(\varphi)$. 
However, by Lemma~\ref{lem: est Q}, we must have $P_\mu(\varphi)\le P_Z(\varphi)$,
which is a contradiction. 
Therefore, the measure $\mu$ has to be liftable. 
The proof of Theorem~B is complete.
\end{proof}

\subsection{Proof of Corollary~C}

We now proceed the proof of Corollary~C.

\begin{proof}[Proof of Corollary~C] 
Let $\{\cS, \tau\}$ be an inducing scheme with inducing domain $W$.
Let $\varphi$ be a strongly nice potential satisfying Conditions \textbf{(P1)-(P5)}
and let $\psi$ be a regular potential satisfying Conditions \textbf{(P1)(P2)}.
Recall that $\CM_L(f, Y)$, 
$\CM^W(f, X)$, $P_L(\cdot)$ and $P^W(\cdot)$ are defined by 
\eqref{def lift class}
\eqref{def CM*}, \eqref{def lift pressure} and \eqref{def v pressure} respectively. 

If Condition \textbf{(H)} holds, then by Statement (1) of Theorem~A, we
let $\mu_t$ be the unique equilibrium measure for the potential $\varphi+t\psi$
in the class $\CM_L(f, Y)$, then 
$\ds P_{\mu_t}(\varphi+t\psi)=P_L(\varphi+t\psi)$
is finite for all $t\in (\tb, \bt)$. 
It follows that $\ds \int |\varphi+t\psi| d\mu_t<\infty$, and hence 
$P_L(\varphi+t\psi)\le P^W(\varphi+t\psi)$ for all $t\in (\tb, \bt)$.

If now Condition \textbf{(L1)} holds,
Lemma~\ref{lem: kappa}, Lemma~\ref{lem: PSZ pt} and Lemma~\ref{lem: kappa1} imply that 
the inducing scheme $\{\cS, \tau\}$ has finite 
$\overline{\varphi+t\psi}$-complexity, and 
\beqn
-\infty<\cK\left( \overline{\varphi+t\psi}\right) =\kappa(t)<q_t\le p_t=P_L(\varphi+t\psi)<\infty.
\eeqn
For any ergodic measure $\mu_*\in \CM^W(f, X)$ such that
\beqn
\begin{split}
P_{\mu_*}(\varphi+t\psi) 
& = P^W(\varphi+t\psi) \\
& =\sup\left\{P_\mu(\varphi+t\psi): \ \mu\in \CM^W(f, X) \ \text{and} \ \int |\varphi+t\psi| d\mu <\infty \right\},
\end{split}
\eeqn
we have 
\beqn
\cK\left( \overline{\varphi+t\psi}\right)<P_L(\varphi+t\psi)\le 
P_{\mu_*}(\varphi+t\psi)\le h(f)+\int |\varphi+t\psi| d\mu_*<\infty.
\eeqn
Then by Theorem~B, the measure $\mu_*$ is liftable and hence $\mu_*=\mu_t$. 
In other words,  $\mu_t$  is in fact 
the unique equilibrium measure for the potential $\varphi+t\psi$ 
in the class $\CM^W(f, X)$.
Moreover, by Statement (3) of Theorem~A,
the function
$
\ds t\mapsto P^W(\varphi+t\psi)=P_L(\varphi+t\psi)
$
is finite and real analytic in $(\tb, \bt)$.
\end{proof}

\medskip

\section*{Acknowledgements}

J. Chen is partially supported by the National Key Research and Development Program of China (No.2022YFA1005802),
the NSFC Grant 12001392 and NSF of Jiangsu BK20200850.
F.~Wang is supported by NSFC grant 11871045 and
the State Scholarship Fund from China Scholarship Council (CSC).
H.-K.~Zhang is partially supported by the 
Simons Foundation Collaboration Grants for Mathematicians (706383).


\begin{thebibliography}{BSC90}

\bibitem{Bow08} R.~Bowen.
\emph{Equilibrium states and the ergodic theory of Anosov diffeomorphisms},
{ Lecture Notes in Mathematics}, \textbf{470}. Springer-Verlag, Berlin, 2008.














\bibitem{Bruin95}
H.~Bruin. 
\emph{Induced maps, Markov extensions and invariant measures in one-dimensional dynamics.}
{Comm. Math. Phys.} \textbf{168}(1995), no. 3, 571--580.

\bibitem{BrTo07}
H.~Bruin and M.~Todd. 
\emph{Markov extensions and lifting measures for complex polynomials.}
{Ergodic Theory Dynam. Systems.} \textbf{27}(2007), no. 3,  743--768.

 

\bibitem{BS80}
L.~A.~Bunimovich and Ya.~G.~Sinai
\emph{Markov partitions for dispersing billiards},
{Commun. Math. Phys.} \textbf{73} (1980), 247--280.


\bibitem{BSC90}
L.~A.~Bunimovich, Ya.~G.~Sinai, and N.~I.~Chernov.
\emph{Markov partitions for two-dimensional hyperbolic billiards},
{Russian Math. Surveys} \textbf{45} (1990), 105--152.

\bibitem{Buzzi99}
J.~Buzzi. 
\emph{Markov extensions for multi-dimensional dynamical systems.}
{Israel J. Math.} \textbf{112}(1999), 357--380.





























\bibitem{CZ05a}
N.~I.~Chernov and H.-K.~Zhang.
\emph{Billiards with polynomial mixing rates},
{Nonlineartity} \textbf{4} (2005), 1527--1553.


\bibitem{CZ05b} N.~Chernov and H.-K.~Zhang.
\emph{A family of chaotic billiards with variable mixing rates},
{Stochast. Dynam.} \textbf{5} (2005), 535--553.








\bibitem{De10}  M. Demers.
Functional norms for Young towers,
\emph{Ergod. Th. Dynam. Syst.} \textbf{30} (2010), no. 5, 1371--1398.





\bibitem{DZ13} M.~Demers and H.-K.~Zhang.
\emph{A functional analytic approach to perturbations of the Lorentz gas},
{Commun. Math. Phys.}  \textbf{324} (2013), 767--830.


\bibitem{DZ14} M. Demers and H.-K. Zhang.
\emph{Spectral analysis of hyperbolic systems with singularities},
{Nonlinearity} \textbf{27}   (2014), 379--433.




























\bibitem{Hof79}
F.~Hofbauer. 
\emph{On intrinsic ergodicity of piecewise monotonic transformations with positive entropy.}
{Israel J. Math.} \textbf{34}(1979), 213--237.

\bibitem{Hof81}
F.~Hofbauer. 
\emph{On intrinsic ergodicity of piecewise monotonic transformations with positive entropy. II.}
{Israel J. Math.} \textbf{38}(1981), 107--115.





\bibitem{Ke89}
G.~Keller. 
\emph{Lifting measures to Markov extensions.}
{Monatsh. Math.} \textbf{108}(1989), 183--200.







 






\bibitem{Pe97} Ya.~Pesin.
\emph{Dimension theory in dynamical systems},
{Contemporary views and applications. Chicago Lectures in Mathematics. 
University of Chicago Press, Chicago, IL, 1997. }


\bibitem{PS05} Ya.~Pesin and S.~Senti.
\emph{Thermodynamic formalism assocaited with inducing schemes for one-dimensional maps},
{Mosc. Math. J.} \textbf{5} (2005), no. 3, 669-678, 743-744.


\bibitem{PS08} Ya.~Pesin and S.~Senti.
\emph{Equilibrium measures for maps with inducing schemes},
{J. Mod. Dyn.} \textbf{2} (2008), no. 3, 397--430.

\bibitem{PSZ08} Ya.~Pesin, S.~Senti, and K.~Zhang.
\emph{Lifting measures to inducing schemes},
{Ergod. Th. Dynam. Syst.} \textbf{28} (2008), no. 2, 553--574.

\bibitem{PSZ16} Ya.~Pesin, S.~Senti, and K.~Zhang.
 \emph{Thermodynamics of towers of hyperbolic type},
{Trans. Amer. Math. Soc.}  \textbf{368} (2016), no. 12,  8519--8552.




\bibitem{Ru78} D.~Ruelle.
\emph{Thermodynamic formalism},
{Encyclopedia of Mathematics and its Applications},
vol 5,  Addison-Wesley, 1978.





\bibitem{Sin72} Ya.~G.~Sinai.
\emph{Gibbs measures in ergodic theory},
{Russ. Math. Surv.} \textbf{27} (1972), 21--69.





\bibitem{Sarig99} O.~Sarig.
\emph{Thermodynamic formalism for countable Markov shifts},
{Ergod. Th. Dynam. Syst.} \textbf{19} (1999), no. 6, 1565--1593.

\bibitem{Sarig01} O.~Sarig.
\emph{Thermodynamic formalism for null recurrent potentials},
{Israel J. Math.} \textbf{121} (2001), 285--311.

 

\bibitem{Sarig03} O.~Sarig.
\emph{Characterization of existence of Gibbs measures for countable Markov shifts},
{Proc. Amer. Math. Soc.} \textbf{131:6} (2003), 1751--1758.



\bibitem{ShZe19} F.~Shahidi and  A.~Zelerowicz.
\emph{Thermodynamics via inducing}, 
{J. Stat. Phys.} \textbf{175} (2019), no. 2, 351--383.





\bibitem{Shields96} P.~Shields.
\emph{The ergodic theory of discrete sample paths},
{Graduate Studies in Mathematics}, \textbf{13}. American Mathematical Society, Providence, RI, 1996. 










\bibitem{Y98} L.~S.~Young.
\emph{Statistical properties of dynamical systems with some hyperbolicity},
{ Ann. Math.} \textbf{147} (1998), 585--650.

\bibitem{Y99}  L.~S.~Young.
\emph{Recurrence times and rates of mixing},
{Israel J. Math.} \textbf{110} (1999), 153--188.



\bibitem{Zwe05} R.~Zweim\"uller.
\emph{Invariant measures for general(ized) induced transformations.}
{Proc. Amer. Math. Soc.} \textbf{133} (2005), no. 8, 2283--2295.



\end{thebibliography}
\end{document}